\newtheorem{thm}{Theorem}[section]
\newtheorem{prop}[thm]{Proposition}
\newtheorem{lemma}[thm]{Lemma}
\numberwithin{equation}{section}
\newenvironment{customass}[1]
  {\innercustomass}
  {\endinnercustomass}
\theoremstyle{definition} 
\theoremstyle{definition}
\renewcommand{\P}{\mathbb{P}}
\newcommand{\R}{\mathbb{R}}
\newcommand{\E}{\mathbb{E}}
\newcommand{\N}{\mathbb{N}}
\newcommand{\eps}{\varepsilon}
\numberwithin{equation}{section}
\begin{document}

\title[Parasite infection in a cell population]
{Parasite infection in a cell population with deaths and reinfections}

\author{Charline Smadi}
\address{Charline Smadi, Univ. Grenoble Alpes, INRAE, LESSEM, 38000 Grenoble, France
 and Univ. Grenoble Alpes, CNRS, Institut Fourier, 38000 Grenoble, France}
\email{charline.smadi@inrae.fr}

\date{}

\maketitle

\begin{abstract}
We introduce a model of parasite infection in a cell population, where cells can be infected, either at birth through maternal transmission, from a contact with the parasites reservoir, or because of the parasites released in the cell medium after the lyses of infected cells. Inside the cells and between infection events, the quantity of parasites evolves as a general non linear branching process. We study the long time behaviour of the infection.
 \end{abstract}

 \vspace{0.2in}

\noindent {\sc Key words and phrases}: Continuous-time and space branching Markov processes, Structured population, 
Long time behaviour, Birth and Death Processes

\bigskip

\noindent MSC 2000 subject classifications: 60J80, 60J85, 60H10.

 \vspace{0.5cm}


\section*{Introduction}

The aim of this paper is to complement the study of parasite or viral infection in a cell population conducted in \cite{BT11,BPS,marguet2020long} by adding possible reinfections, either due to new contacts of the cell population with the reservoir of the pathogen, or within the cell population, due to the pathogens released in the medium after the lyses of infected cells.

Many pathogens, in particular the ones responsible for emerging infectious diseases, are transmitted both between and within a host community. The Centre for Disease Control and Prevention (2018) defines the reservoir of a pathogen as “any animal, person, plant, soil, substance or combination of any of these in which the infectious agent normally lives”. The epidemiological dynamics of infectious diseases is highly dependent on the transmission from the reservoir, and the start of an outbreak is promoted by a primary contact between the reservoir and the incidental host (i.e. an host that becomes infected but is not part of the reservoir) leading to the potential transmission of the infection to the host population. We aim at taking into account such mechanisms.

We will thus assume that a cell population may be recurrently infected by contacts of the cells with a reservoir, and that the pathogens may also be transmitted within the cell population, either when a cell dies, releasing the pathogens it contains in the cell medium, or when a cell divides into two daugther cells receiving  respectively a fraction $\Theta$ and $1-\Theta$ of the pathogens of their mother. In this work, the cell death and division rates do not depend on the quantity of pathogens in the cells. This simplification follows from the difficulty to obtain an explicit spinal process (see later). A generalization to death and division rates dependent on the quantity of parasites in the cell could be the subject of future work.

We have two specific examples in mind. First of all the protozoan parasite \textit{Toxoplasma gondii}, which is capable of infecting and replicating within nucleated mammalian and avian cells \cite{dubey1998advances,wong1993biology}. Its life cycle is divided between feline and nonfeline infections, which are correlated with sexual and asexual replications, respectively. The asexual component consists of two distinct stages of growth depending on whether the infection is in the acute or chronic phase. The tachyzoite stage defines the rapidly growing form of the parasite found during the acute phase of toxoplasmosis. Tachyzoites replicate inside a cell until they exit to infect other cells, usually after 64 to 128 parasites have accumulated per cell \cite{radke1998cell}. The second example is \textit{Plasmodium falciparum}, also a protozoan parasite. After entering a red blood cell, the merozoites grow first to a ring-shaped form and then to a larger form called a trophozoite. Trophozoites then mature to schizonts which divide several times to produce new merozoites. The infected red blood cell eventually bursts, allowing the new merozoites to travel within the bloodstream to infect new red blood cells. Most merozoites continue this replicative cycle. However some merozoites upon infecting red blood cells differentiate into male or female sexual forms. Here again we will only be interested in the asexual part of the pathogen cycle.

The reservoir of \textit{Toxoplasma gondii} consists essentially in contaminated meat, unpasteurized goat’s milk, fresh plant products, and water \cite{hill2016toxoplasma}. In the case of \textit{Plasmodium falcipaum}, human infection is due to bites by infected mosquitoes.\\

The dynamics of the quantity of parasites in a cell will be given by a Stochastic Differential Equation (SDE) with a diffusive term and positive jumps.
Then, 
after an exponential time, the cell dies, or divides and shares its parasites between its two daughter cells.
We are interested in the long time behaviour of the parasite infection in the cell population.
We refer to \cite{marguet2020long} for references on biological studies and on previous mathematical works on the class of processes under study.
In particular, references \cite{Kimmel,bansaye2008,bansaye2009,alsmeyer2013, alsmeyer2016,meleard2013evolutive, osorio2020level} focused on branching within branching to study host-parasites dynamics.
Similarly as in \cite{georgii2003supercritical,hardy2009spine,bansaye2011limit,marguet2016uniform,cloez2017limit,marguet2020long}, 
the proof strategy consists in introducing a spinal process, and investigating the long time behaviour of this process that corresponds to the trait of a uniformly sampled individual in the population.
Via a Many-to-One formula (see Proposition \ref{pro_MtO}), we may deduce properties on the 
long time behaviour of the process at the population level.
Even if the cell division rate does not depend on the quantity of parasites, the spinal process may be non-homogeneous, because of the reinfection mechanism. Indeed the  quantity of parasites released in the medium due to the cells lyses depends on the quantity of parasites in the cell population.\\

In the sequel $\N:=\{0,1,2,...\}$ will denote the set of nonnegative integers, $\R_+:=[0,\infty)$ the real line, $\bar{\R}_+ = \R_+ \cup \{\infty\}$
and $\R_+^*:=(0,\infty)$.
We will denote by $\mathcal{C}_b^2(A)$ the set of twice continuously differentiable bounded functions on a set $A$. Finally, for any stochastic 
process $X$ on $\bar{\mathbb{R}}_+$ or $Z$ on the set of point measures on $\bar{\mathbb{R}}_+$, we will denote by 
$\mathbb{E}_x\left[f(X_t)\right]=\E\left[f(X_t)\big|X_0 = x\right]$, $\mathbb{E}\left[f(X_t)\right]=\mathbb{E}_0\left[f(X_t)\right]$ and $\mathbb{E}_{\delta_x}\left[f(Z_t)\right]=\E\left[f(Z_t)\big|Z_0 = \delta_x\right]$.

\section{Definition of the population process} \label{section_model}

\subsection{Parasites dynamics in a cell}

Each cell contains parasites whose quantity evolves as a diffusion with positive jumps. More precisely, we consider the SDE
\begin{align}  \label{X_sans_sauts} \mathfrak{X}_t =x + \int_0^t g(\mathfrak{X}_s)ds
+\int_0^t\sqrt{2\sigma^2 (\mathfrak{X}_s)}dB_s +
\int_0^t\int_0^{p(\mathfrak{X}_{s^-})}\int_{\mathbb{R}_+}z\widetilde{Q}(ds,dx,dz),
\end{align}
where $x$ is nonnegative, $g$, $\sigma \geq 0$ and $p\geq0$ are real functions on $\mathbb{R}_+$, $B$ is a standard Brownian motion,  
$\widetilde{Q}$ is a compensated Poisson point measure with intensity 
$ds\otimes dx\otimes \pi(dz)$, and $\pi$ is a nonnegative measure on $\mathbb{R}_+$.
Finally, $B$ and $Q$ are independent.

Under our conditions (see later), the SDE \eqref{X_sans_sauts} has a unique non-negative strong solution.
In this case, it is a Markov process with infinitesimal generator $\mathcal{G}$, satisfying for all $f\in C_b^2(\bar{\mathbb{R}}_+)$,
\begin{align} \label{def_gene}
\mathcal{G}f(x) = g(x)f'(x)+\sigma^2(x)f''(x)+p(x)\int_{\mathbb{R}_+}\left(f(x+z)-f(x)-zf'(x)\right)\pi(dz) ,
\end{align}
and $0$ and $+ \infty$ are two absorbing states.
Following \cite{marguet2016uniform}, we denote by $(\Phi(x,s,t),s\leq t)$ the corresponding stochastic flow {\it i.e.} 
the unique strong solution to \eqref{X_sans_sauts} satisfying $\mathfrak{X}_s = x$ and the dynamics of the trait between division events is well-defined. 

\subsection{Reinfection}

New parasites can enter cells through two distinct mechanisms: 
\begin{itemize}
\item either the population comes into contact with the reservoir of the parasite and each cell is infected at a rate $\lambda(.)$ by a quantity of parasites $\mathfrak{i}$ that follows a law $\mathfrak{I}$. This rate may depend on the quantity of parasites in the cell. An increasing rate with the quantity of parasites could indicate that the cell's defences are weakened or that the parasite is altering the properties of its host to facilitate infection by new parasites. A decreasing rate could mean that the immune system has detected the parasite and is taking measures to limit the infection;
\item or the cells are infected by parasites released by other cells at death. The rate of infection $r(.)$ then depends on the amount present in each cell. Ideally, it would be convenient to be able to follow the parasites released by a given cell at death, but the equations obtained would not be closed and the dynamics of the infection could not be studied. A simplifying hypothesis is therefore made by assuming that the infection rate is a function of the mean quantity of parasites in the population, which seems reasonable as the cell death rate does not depend on the quantity of parasites in the cell.
The quantity $\mathfrak{p}$ of parasites entering a cell during such an infection event follows a law $\mathfrak{P}$.
\end{itemize}

\subsection{Cell division and death}

Each cell divides at rate $b$ and is replaced by two 
cells with quantity of parasites at birth given by $\Theta x$ and $(1-\Theta)x$. Here $\Theta$ is a nonnegative random variable on $(0,1)$ 
with associated symmetric distribution $\kappa(d\theta)$ satisfying $\E[ \ln (1/\Theta)]<\infty$.
Finally, each cell dies at rate $d$.

\subsection{Existence and uniqueness}

We use the classical Ulam-Harris-Neveu notation to identify each individual. Let us denote by
\begin{equation*} \label{ulam_not} \mathcal{U}:=\bigcup_{n\in\mathbb{N}}\left\{0,1\right\}^{n}
\end{equation*}
the set of possible labels, $\mathcal{M}_P({\mathbb{R}}_+)$ the set of point measures on ${\mathbb{R}}_+$, and 
$\mathbb{D}(\mathbb{R}_+,\mathcal{M}_P({\mathbb{R}}_+))$, the set of c\`adl\`ag measure-valued processes. 
For any $Z\in \mathbb{D}(\mathbb{R}_+,\mathcal{M}_P({\mathbb{R}}_+))$, $t \geq 0$, we write 
\begin{equation} \label{Ztdirac}
Z_t = \sum_{u\in V_t}\delta_{X_t^u},
\end{equation}
where $V_t\subset\mathcal{U}$ denotes the set of individuals alive at time $t$ and $X_t^u$ the trait at time $t$ of the individual $u$. 
Let $E = \mathcal{U}\times(0,1)\times \R_+$, $E_{r1} = E_{r2} = \mathcal{U}\times \R_+ \times \mathbb{R}_+$  and $M(ds,du,d\theta,dz)$, $M_{r1}(ds,du,d\mathfrak{i},dz)$, $M_{r2}(ds,du,d\mathfrak{p},dz)$ be independent Poisson point measures on $\mathbb{R}_+\times E$, $\mathbb{R}_+\times E_{r1}$, $\mathbb{R}_+\times E_{r2}$ with respective intensity 
$ds\times n(du)\times \kappa(d\theta)$, $ds\times n(du)\times \mathfrak{I}(d\mathfrak{i})\times dz$, $ds\times n(du)\times \mathfrak{P}(d\mathfrak{p})\times dz$, where $n(du)$ denotes the counting measure on $\mathcal{U}$. 
Let \\ $\left(\Phi^u(x,s,t),u\in\mathcal{U},x\in\bar{\mathbb{\R}}_+, s\leq t\right)$ be a family of independent stochastic flows satisfying \eqref{X_sans_sauts} 
describing the individual-based dynamics.
We assume that $M$, $M_{r1}$, $M_{r2}$ and $\left(\Phi^u,u\in\mathcal{U}\right)$ are independent. We denote by $(\mathcal{F}_t, t \geq 0)$ the filtration generated by the Poisson point measures $M$, $M_{r1}$ and $M_{r2}$ and 
the family of stochastic processes $(\Phi^u(x,s, t), u\in\mathcal{U},x\in\bar{\mathbb{R}}_+,s\leq t)$ up to time $t$. \\

We now introduce assumptions to ensure the existence and strong uniqueness of the process. 
Points $i)$ to $iii)$ of Assumption {\bf{EU}} 
(for Existence and Uniqueness) ensure that 
the dynamics in a cell line is well defined between the reinfection times
(as the unique nonnegative strong solution to the SDE \eqref{X_sans_sauts} up to explosion, and infinite value of the quantity of parasites after explosion); point $iv)$ ensures that a cell has a higher probability to be infected when there are more parasites in the cell medium, and that if there is no parasite in the cell, there is no reinfection.

\begin{customass}{\bf{EU}}\label{ass_A}
\begin{enumerate}[label=\roman*)]
\item The function $p$ is locally Lipschitz on $\R_+$, non-decreasing and $p(0)= 0$. The function $g$ is continuous on $\R_+$, $g(0)=0$ and for any $n \in \N$ there exists a finite constant $B_n$ such that for any $0 \leq x \leq y \leq n$
\begin{align*} |g(y)-g(x)|
\leq B_n \phi(y-x),
\ 
\text{ where }\ 
\phi(x) = \left\lbrace\begin{array}{ll}
x \left(1-\ln x\right) & \textrm{if } x\leq 1,\\
1 & \textrm{if } x>1.\\
\end{array}\right.
\end{align*}
\item The function $\sigma$ is H\"older continuous with index $1/2$ on compact sets and $\sigma(0)=0$. 
\item The measure $\pi$ satisfies
$ \int_0^\infty (z\wedge z^2)\pi(dz)<\infty. $
\item $r$ and $\lambda$ are continuous, $r$ is non-decreasing and bounded, $ \int_0^\infty \mathfrak{i}\mathfrak{I}(d\mathfrak{i})+\int_0^\infty \mathfrak{p}\mathfrak{P}(d\mathfrak{p})<\infty$, and $r(0)=0$.
\end{enumerate}
\end{customass}

Recall the definition of $\mathcal{G}$ in \eqref{def_gene}.
Then the structured population process may be defined as the strong solution to a SDE. 

\begin{prop} \label{pro_exi_uni}
Under Assumption \ref{ass_A} there exists a strongly unique $\mathcal{F}_t$-adapted 
c\`adl\`ag process $(Z_t,t\geq 0)$ taking values in $\mathcal{M}_P(\bar{\mathbb{R}}_+)$ such that for all $f\in C_b^2(\bar{\mathbb{R}}_+)$ and $x_0,t\geq 0$, 
\begin{align*}\nonumber
\langle Z_{t},f\rangle = &f\left(x_{0}\right)+\int_{0}^{t}\int_{\mathbb{R}_+}\mathcal{G}f(x)Z_{s}\left(dx\right)ds+M_{t}^f(x_0)&\\\nonumber
 +\int_{0}^{t}\int_{E}&\mathbf{1}_{\left\{u\in V_{s^{-}}\right\}}\left(\mathbf{1}_{\left\{z\leq b\right\}}\left(f\left(\theta X_{s^-}^u \right)+ f\left((1-\theta) X_{s^-}^u \right)\right)-
\mathbf{1}_{\left\{z\leq b+d\right\}}f\left(X_{s^{-}}^{u}\right)\right) M\left(ds,du,d\theta,dz\right)\\\nonumber
 +\int_{0}^{t}\int_{E_{r1}}&\mathbf{1}_{\left\{u\in V_{s^{-}}\right\}}\left(\mathbf{1}_{\left\{z\leq \lambda(X_{s^-}^u)\right\}}\left(f\left(X_{s^-}^u+ \mathfrak{i} \right) -
f\left(X_{s^{-}}^{u}\right)\right)\right) M_{r1}\left(ds,du,d\mathfrak{i},dz\right)\\\nonumber
 +\int_{0}^{t}\int_{E_{r2}}&\mathbf{1}_{\left\{u\in V_{s^{-}}\right\}}\left(\mathbf{1}_{\left\{z\leq r\left(e^{-(b-d)s}\E_{\delta_{x_0}}[\langle Z_s,\mathrm{Id}\rangle]\right)\right\}}\left(f\left(X_{s^-}^u+ \mathfrak{p} \right) -
f\left(X_{s^{-}}^{u}\right)\right)\right) M_{r2}\left(ds,du,d\mathfrak{p},dz\right),
\end{align*}
where for all $x\geq 0$, $M_{t}^f(x)$ is a $\mathcal{F}_t$-martingale, and $Id$ denotes the identity function. 
\end{prop}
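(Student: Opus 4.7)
The plan is to construct $(Z_t)_{t\geq 0}$ pathwise from the Poisson point measures $M$, $M_{r1}$, $M_{r2}$ and the independent family of stochastic flows $\Phi^u$, combined with a fixed-point argument for the deterministic function
\begin{equation*}
m(s) := e^{-(b-d)s}\,\E_{\delta_{x_0}}[\langle Z_s,\mathrm{Id}\rangle]
\end{equation*}
entering the rate of $M_{r2}$. I would first decouple the self-referential dependence by fixing a candidate rate $\gamma$ a priori, constructing the corresponding linearised process, and only then recovering the self-consistent $m$.

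For the individual dynamics, points i)--iii) of Assumption \ref{ass_A} give via standard results (Fu--Li and \cite{marguet2016uniform}) a pathwise unique non-negative strong solution $\Phi(x,s,\cdot)$ to \eqref{X_sans_sauts}, possibly reaching $+\infty$ in finite time, so the trajectory of each cell between population jumps is unambiguously prescribed by $\Phi^u$. For any bounded measurable $\gamma:\R_+\to[0,\|r\|_\infty]$, replace in the SDE the indicator $\mathbf{1}_{\{z\leq r(e^{-(b-d)s}\E[\langle Z_s,\mathrm{Id}\rangle])\}}$ by $\mathbf{1}_{\{z\leq\gamma(s)\}}$, obtaining a linearised equation. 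Since $b$ and $d$ are constants, the population size $N_t^\gamma=\langle Z_t^\gamma,1\rangle$ is stochastically dominated by a pure-birth process of rate $b$, hence almost surely finite on compact time intervals; together with the local finiteness of $\lambda(X_{s^-}^u)$ (using the continuity of $\lambda$ and a localisation of the trait before possible explosion to $+\infty$), this allows an inductive construction of $Z^\gamma$ between successive atoms of $M\cup M_{r1}\cup M_{r2}$ acting on the population, with pathwise uniqueness by the standard coupling argument, in the spirit of \cite{marguet2020long}.

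The fixed-point is then set up by letting $h^\gamma(t):=\E_{\delta_{x_0}}[\langle Z_t^\gamma,\mathrm{Id}\rangle]$ and defining $\Psi(\gamma)(s):=r(e^{-(b-d)s}h^\gamma(s))$; a solution to the original SDE is exactly a fixed point of $\Psi$. Testing the linearised equation against truncations of the identity and passing to the limit, using the symmetry of $\kappa$ to cancel the division term, one derives a Gr\"onwall-type integral inequality for $h^\gamma$ on $[0,T]$ depending only on $x_0$, $T$, $\|r\|_\infty$ and the first moments of $\pi$, $\mathfrak{I}$, $\mathfrak{P}$. A coupling of $Z^{\gamma_1}$ and $Z^{\gamma_2}$ through their driving measures gives an estimate of the form
\begin{equation*}
|h^{\gamma_1}(s)-h^{\gamma_2}(s)|\leq C_T\int_0^s\bigl(|\gamma_1-\gamma_2|(u)+|h^{\gamma_1}-h^{\gamma_2}|(u)\bigr)\,du,
\end{equation*}
which, combined with the uniform continuity of $r$ on $[0,\|r\|_\infty]$, Gr\"onwall's lemma, and a weighted sup-norm $\|\gamma\|_\alpha:=\sup_{s\leq T}e^{-\alpha s}|\gamma(s)|$ for $\alpha$ large enough, yields contraction of $\Psi$ on $[0,T]$ and hence existence and uniqueness of its fixed point; iteration on $T$ provides the global conclusion. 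The martingale property of $M_t^f$ then follows from the construction by applying It\^o's formula to the constructed process.

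The main obstacle I anticipate is controlling the first moment $h^\gamma$: the jump intensity $p$ in \eqref{X_sans_sauts} is only locally Lipschitz and non-decreasing, $\pi$ may have infinite total mass near zero, and $\lambda$ need not be bounded, so a careful localisation is required to bound $\E[X_t^u]$ and propagate it to $\langle Z_t^\gamma,\mathrm{Id}\rangle$. The two structural features that rescue the argument are the constancy of $b,d$, which removes all trait-feedback on the population size and produces the Yule domination, and the boundedness of $r$, which keeps the linearised reinfection-from-lyses rate uniformly bounded regardless of $h^\gamma$, so that the fixed-point can be set up on the compact set of $\gamma$'s rather than on the potentially unbounded $h^\gamma$ itself.
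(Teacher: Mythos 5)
Your overall architecture (pathwise construction from $M$, $M_{r1}$, $M_{r2}$ and the flows, Yule domination of $N_t$ because $b$ and $d$ are constant, inductive construction between population-level atoms) is in line with the paper, which reduces the statement to \cite[Proposition 1]{palau2018branching}, \cite[Theorem 2.1]{marguet2016uniform} and Proposition 1.1 of \cite{marguet2020long}, and handles the new lyses-reinfection events simply by interleaving them into that construction, their per-cell rate being bounded by $\|r\|_\infty$. The genuine gap is in your fixed-point step for the self-consistent rate $m(s)=e^{-(b-d)s}\E_{\delta_{x_0}}[\langle Z_s,\mathrm{Id}\rangle]$. Under Assumption \ref{ass_A}, $g$ is only continuous with a local Osgood-type condition and $\lambda$ is only continuous, possibly unbounded and superlinear; the within-cell dynamics may even reach $+\infty$ in finite time (this is precisely the regime \ref{LNinfty} analysed in point (3) of Proposition \ref{cor_esperance}), and the statement allows $\bar{\R}_+$-valued traits. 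Consequently $h^\gamma(t)=\E_{\delta_{x_0}}[\langle Z^\gamma_t,\mathrm{Id}\rangle]$ is in general infinite for $t>0$, so your claimed Gr\"onwall bound for $h^\gamma$ depending only on $x_0$, $T$, $\|r\|_\infty$ and the first moments of $\pi$, $\mathfrak{I}$, $\mathfrak{P}$ cannot hold (it ignores $g$ and $\lambda$ altogether), and the quantity $|h^{\gamma_1}(s)-h^{\gamma_2}(s)|$ on which your contraction rests may be of the form $\infty-\infty$. The equation itself survives this, because $r$ is bounded and non-decreasing so $r(\infty)$ makes sense; it is only the metric you put on the $h$'s that breaks down.

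Even on a time window where all means are finite, two further ingredients are missing: the stability estimate $|h^{\gamma_1}(s)-h^{\gamma_2}(s)|\leq C_T\int_0^s\bigl(|\gamma_1-\gamma_2|(u)+|h^{\gamma_1}-h^{\gamma_2}|(u)\bigr)du$ would require global Lipschitz or linear-growth control on $g$, $\lambda$ and $p$, which Assumption \ref{ass_A} does not provide; and $r$ is only continuous, non-decreasing and bounded, not Lipschitz, so uniform continuity of $r$ does not convert such an estimate into a contraction of $\Psi$ in a weighted sup-norm --- Banach's theorem does not apply. The paper never solves a contraction for the mean: it constructs the solution between the lyses-reinfection events (whose total rate is at most $\|r\|_\infty N_{s^-}$, the infected cell being chosen uniformly since this rate is trait-independent), refers to \cite{marguet2020long} for the time- and law-dependence of this rate, and disposes of the unboundedness of $\lambda$ by noting that the $M_{r1}$-integrand vanishes on a cell whose parasite load has reached $+\infty$ --- a point your localisation ``before possible explosion'' does not quite cover. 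If you want a self-contained treatment of the nonlinearity under the stated hypotheses, a monotone iteration in the rate $\gamma\in[0,\|r\|_\infty]$ (using that $r$ is non-decreasing and the process is monotone in this rate) is more promising than a Banach contraction on $h^\gamma$, though uniqueness would then require a separate argument.
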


We always assume that there is one cell at time $0$.
$e^{(b-d)s}$ is thus the mean number of cells at time $s$, and
$ e^{-(b-d)s}\E[\langle Z_s,\mathrm{Id}\rangle]$
is the mean quantity of parasites in the population divided by the mean number of cells at time $s$.

The proof of Proposition \ref{pro_exi_uni} is a combination of \cite[Proposition 1]{palau2018branching} and \cite[Theorem 2.1]{marguet2016uniform}.
It is essentially the same as the proof of Proposition 1.1 in \cite{marguet2020long} and we refer the reader to this paper. The only difference is the addition of the reinfection events. Reinfection events due to the release of parasites in the cell medium after cell lyses have a bounded rate (as $r$ is bounded), thus we can construct the solution between these reinfection events, choose uniformly the cell which undergoes the infection, add the quantity of parasites in the cell, and reiterate the procedure until the next infection event of this type occurs. Due to the dynamic of the number of parasites in the cells (given by $\mathcal{G}$), the quantity of parasites may reach infinity in a cell $u_0$ for instance. But in this case, 
\begin{align*}&\mathbf{1}_{\left\{u_0\in V_{s^{-}}\right\}}\left(\mathbf{1}_{\left\{z\leq \lambda(X_{s^-}^{u_0})\right\}}\left(f\left(X_{s^-}^{u_0}+ \mathfrak{i} \right) -
f\left(X_{s^{-}}^{u_0}\right)\right)\right)\\
&=\mathbf{1}_{\left\{u_0\in V_{s^{-}}\right\}}\left(\mathbf{1}_{\left\{z\leq \lambda(\infty)\right\}}\left(f\left(\infty \right) -
f\left(\infty\right)\right)\right)=0\end{align*}
for any time $s$ after the time when $X^{u_0}$ reaches infinity. The reinfection events due to the reservoir thus do not contribute anymore to the dynamics when they concern a cell with an infinite quantity of parasites, even if $\lambda$ is not bounded.

For the sake of readability we will assume that the processes under consideration in the sequel satisfy Assumption \ref{ass_A}, but we will not indicate it.\\

We aim at investigating the long time behaviour of the infection in the cell population.
As we have explained in the introduction, the strategy to obtain information at the population level is to introduce an auxiliary process behaving as a `typical individual'. 
A slight adaptation of Theorem 3.1 in \cite{marguet2016uniform} and Proposition 1 in \cite{palau2018branching} allows us to obtain the following Many-to-One formula (see ideas of the proof on page \pageref{proof_adaptation}):

\begin{prop} \label{pro_MtO}
Let the process $Z$ be defined as in Proposition \ref{pro_exi_uni}, and recall notation \eqref{Ztdirac}. Then for $x_0 \geq 0 $ and all measurable bounded functions $F : \mathbb{D}([0, t], \bar{\R}_+ ) \to \R$, we have:
\begin{equation}\label{MtO}
 \E_{\delta_{x_0}} \left[ \sum_{u \in V_t} F(X_s^u,s \leq t) \right] = e^{(b-d)t}\E_{x_0}[F(Y_s,s\leq t)],
\end{equation}
where we recall that for $s \leq t$, $X_s^u$ denotes the ancestor of individual $u \in V_t$ at time $s$. In \eqref{MtO}, $Y=(Y_t: t \geq 0)$ is a time inhomogeneous diffusion with jumps, which is the unique non-negative strong
solution to
\begin{align} \nonumber \label{EDS_Y} 
Y_t =x_0& + \int_0^t g(Y_s)ds
+\int_0^t\sqrt{2\sigma^2 (Y_s)}dB_s +
\int_0^t\int_0^{p(Y_{s-})}\int_{\mathbb{R}_+}z\widetilde{Q}(ds,dx,dz)\\&+\nonumber
\int_0^t\int_0^{2b}\int_{0}^1(\theta-1)Y_{s-}N(ds,dx,d\theta)
+\int_0^t\int_0^{r(\E_{x_0}[Y_{s}])}\int_{\mathbb{R}_+}\mathfrak{p}N_1(ds,dx,d\mathfrak{p})\\&
+\int_0^t\int_0^{\lambda(Y_{s-})}\int_{\mathbb{R}_+}\mathfrak{i}N_2(ds,dx,d\mathfrak{i}),
\end{align}
where $N(ds,dx,d\theta)$, $N_1(ds,dx,d\mathfrak{p})$ and $N_2(ds,dx,d\mathfrak{i})$ are independent Poisson Point Measures with intensities $ds dx \kappa(d\theta)$, $ds dx \mathfrak{P}(d\mathfrak{p})$ and $ds dx \mathfrak{I}(d\mathfrak{i})$, 
and $Q$ and $B$ have been defined in \eqref{X_sans_sauts}. $N$, $N_1$, $N_2$, $Q$ and $B$ are independent.
\end{prop}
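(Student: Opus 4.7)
The plan is to treat this statement as a weighted extension of the Many-to-One formulas of \cite{marguet2016uniform,palau2018branching}, handling the two reinfection mechanisms separately from the branching structure and closing the self-referential appearance of the population mean at the end.

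First I would invoke Theorem 3.1 of \cite{marguet2016uniform} (adapted as in Proposition 1 of \cite{palau2018branching} to cope with the positive jumps appearing in $\mathcal{G}$) applied to the pure branching dynamics: evolution of the trait under $\mathcal{G}$, division at constant rate $b$ and death at constant rate $d$. Because division and death rates do not depend on the trait, the biasing reduces to the prefactor $e^{(b-d)t}$ and the spinal process is driven by $\mathcal{G}$ supplemented by jumps from $Y_{s-}$ to $\Theta Y_{s-}$ at rate $2b$ (uniform choice between the two daughters), which accounts for the term $\int_0^t\int_0^{2b}\int_0^1(\theta-1)Y_{s-}N(ds,dx,d\theta)$ in \eqref{EDS_Y}. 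The reinfection events are non-branching: they modify one trait without creating or removing lineages, so they propagate transparently through the spinal construction. Reinfection from the reservoir at trait-dependent rate $\lambda(X^u_{s-})$ becomes a jump of $Y$ at rate $\lambda(Y_{s-})$, while reinfection from lyses, whose per-cell rate $r(e^{-(b-d)s}\E_{\delta_{x_0}}[\langle Z_s,\mathrm{Id}\rangle])$ is a deterministic function of $s$, translates into a jump of $Y$ at the same deterministic rate. To obtain the closed form \eqref{EDS_Y}, I would then apply the Many-to-One identity to $F(y_\cdot)=y_s$, yielding $\E_{\delta_{x_0}}[\langle Z_s,\mathrm{Id}\rangle]=e^{(b-d)s}\E_{x_0}[Y_s]$, so that the population-level rate coincides with $r(\E_{x_0}[Y_s])$.

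The hard part will be the McKean-Vlasov nature of \eqref{EDS_Y}: the coefficients depend on the law of the solution itself through $\E_{x_0}[Y_s]$. The dependence is only on a deterministic function of time, so I would close the fixed-point argument by introducing, for any continuous nonnegative $m:\R_+\to\R_+$, the linearized SDE $Y^m$ obtained by replacing $r(\E_{x_0}[Y_s])$ with $r(m(s))$, proving its strong existence and pathwise uniqueness via the arguments of \cite{palau2018branching} (using Assumption \ref{ass_A} together with the finiteness of the first moments of $\mathfrak{I}$ and $\mathfrak{P}$ to control the jump contributions), and then verifying that the map $m\mapsto \E_{x_0}[Y^m_\cdot]$ admits a unique fixed point thanks to the continuity, boundedness and monotonicity of $r$ combined with a Gronwall argument on $\E_{x_0}[Y^m_t]$. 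This fixed point simultaneously furnishes the strongly unique solution of \eqref{EDS_Y} and makes the Many-to-One identity self-consistent, completing the proof.
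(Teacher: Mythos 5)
Your proposal follows essentially the paper's route for the core of the statement: the Many-to-One formula is obtained by adapting Theorem 3.1 of \cite{marguet2016uniform} together with Proposition 1 of \cite{palau2018branching}, the constant rates $b$ and $d$ only produce the factor $e^{(b-d)t}$ and the jumps $Y_{s-}\to\theta Y_{s-}$ at rate $2b$, and the lysis rate is closed into $r(\E_{x_0}[Y_s])$ by applying the formula to the identity function, i.e. the identification $e^{-(b-d)s}\E_{\delta_{x_0}}[\langle Z_s,\mathrm{Id}\rangle]=\E_{x_0}[Y_s]$ (which needs a truncation step, since \eqref{MtO} is stated for bounded $F$). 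The one presentational difference is how the reinfections enter the spinal construction: you keep them as trait jumps that pass through unbiased, while the paper encodes each reinfection as a death-plus-birth event, i.e. a time-inhomogeneous, trait-dependent ``division'' leaving the number of cells unchanged, which keeps everything inside the branching formalism of \cite{marguet2016uniform} and lets the result be read off the evolution equation for the mean measure (the analogue of Equation (3.8) there), where the terms in $d$ cancel. Both viewpoints lead to the same generator computation, but note that your phrase ``propagate transparently through the spinal construction'' is precisely the step that has to be verified by such a computation, not a consequence of the cited theorems as stated.

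Where you genuinely diverge is the self-referential coefficient, and this is also where your plan has a soft spot. The paper does not solve a McKean--Vlasov problem: it treats $s\mapsto r\bigl(e^{-(b-d)s}\E_{\delta_{x_0}}[\langle Z_s,\mathrm{Id}\rangle]\bigr)$ as a deterministic time-dependent rate supplied by the population process already constructed in Proposition \ref{pro_exi_uni}, obtains strong existence and pathwise uniqueness of the resulting (linearized) SDE from \cite{palau2018branching} with the time dependence handled as in Appendix B of \cite{marguet2020long}, and only afterwards rewrites the rate as $r(\E_{x_0}[Y_s])$ via the Many-to-One identity. Your extra fixed-point argument for $m\mapsto\E_{x_0}[Y^m_\cdot]$ is therefore unnecessary, and as described it would not go through: uniqueness of the fixed point by a Gronwall/contraction estimate requires $r$ Lipschitz, whereas Assumption \ref{ass_A} only gives continuity, boundedness and monotonicity; moreover $Y^m$ is $[0,\infty]$-valued and may explode (this is exactly the regime of Condition \ref{LNinfty}), so $\E_{x_0}[Y^m_t]$ can be infinite and the map does not act on a space where such estimates make sense. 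Dropping that step and closing the loop only through the identity-function application of \eqref{MtO}, which your plan already contains, recovers the paper's argument.
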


\section{Results} \label{section_results}

We will now study how the interactions between the parasites growth rate, fluctuations and positive jumps, the rate and intensity of reinfections and the cell division rate and law shape the long time behaviour of the infection.

\subsection{Controlled or uncontrolled infection}

First, we want to know whether the amount of parasites in the cells remains bounded.
To this aim, we define the function
\begin{equation} \label{def_p}
\rho(x):=\E[ \mathcal{I}]\lambda(x)+\E[ \mathcal{P}] r(x)+g(x)-bx, \quad x \geq 0,
\end{equation}
where we $\mathcal{P}$ (resp. $\mathcal{I}$) denotes a random variable with law $\mathfrak{P}$ (resp. $\mathfrak{I}$). The function $\rho$ takes into account the different mechanisms of the parasite infection: infection by the reservoir and after cell lyses, growth of parasites in cells, and cell divisions. The behaviour of $\rho(x)$ for large $x$ will tell us if the cell population may control the parasite infection on the long term.

Let us introduce the set 
\begin{equation} \label{def_mathcalA}
\mathcal{A}:= \{ a > 1, \E[\Theta^{1-a}]<\infty \},
\end{equation}
and for $a \neq 1 \in \R_+^*$ and $x>0$,
\begin{align}\label{def:Ia}
I_a(x)= ax^{-2}\int_{\mathbb{R}_+} z^2\left(\int_0^1 (1+zx^{-1}v)^{-1-a}(1-v)dv\right)\pi(dz)
\end{align}
as well as
\begin{equation} \label{def_mathcalD} \mathcal{D}(a,x):= \frac{g(x)}{x}- a \frac{\sigma^2(x)}{x^2} - p(x)I_a(x)
+ \lambda(x)\frac{\E[(1+\mathfrak{I}/x)^{1-a}]-1}{1-a} . \end{equation}

To know if the quantity of parasites can explode (reach $\infty$ in a finite time), we will contrast the two following assumptions:

\begin{enumerate}[label=\bf{(SN$\infty$)}]
\item \label{SNinfty} 
There exist $a<1$ and a non-negative function $f$ on $\mathbb{R}_+$ such that
\begin{align*}
 \mathcal{D}(a,x)=-f(x)+ o(\ln x),\quad (x\rightarrow +\infty).
\end{align*}
\end{enumerate}

\begin{enumerate}[label=\bf{(LN$\infty$)}]
\item\label{LNinfty}  There exist $a\in\mathcal{A}$, $\eta>0$ and $x_0> 0$ such that for all $x\geq x_0$
\begin{align*}
\mathcal{D}(a,x) \geq \ln x \left(\ln\ln x\right)^{1+\eta}.
\end{align*}
\end{enumerate}

Condition {\bf{(SN$\infty$)}} ensures that $Y$ does not explode, whereas under Condition {\textbf{(LN$\infty$)}}, the process $Y$ has a positive probability to reach $\infty$ in finite time (see \cite{companion} for the introduction of these conditions, being a refinement of conditions introduced in \cite{li2017general}).
Notice that under the assumptions of point $(1)$ of the following Proposition, Condition \ref{SNinfty} holds. 
Depending on the relative strengths of the functions $g, p, \sigma, \lambda$ and the parameter $b$, we may obtain contrasted long time behaviours for the infection in the cell population.

\begin{prop} \label{cor_esperance}
Let the process $Z$ be defined as in Proposition \ref{pro_exi_uni}, with $Z_0 = \delta_0$, and $N_t = \langle Z_t,1 \rangle$ denote the number of cells alive at time $t\geq 0$.
\begin{enumerate}
\item If for large $x$, $g(x)+ \E[\mathcal{I}]\lambda(x) \leq \tilde{g}x$ with $\tilde{g}<b$, then there exists a constant $C$ such that for any $K,t\geq 0$,
$$ \E_{\delta_0} \left[\mathbf{1}_{\{N_t \geq 1\}} \frac{\sum_{u \in V_t}\mathbf{1}_{\{X_t^{(u)}\geq K\}}}{N_t} \right]\leq  \frac{C}{\sqrt{K}}. $$
\item If there exists $\tilde{g}>b$ such that $g(x)+ \E[\mathcal{I}]\lambda(x)\geq \tilde{g}x$ for any $x>0$, then for every $K\geq 0$,
$$ \lim_{t \to \infty} \mathbf{1}_{\{N_t \geq 1\}} \frac{\sum_{u \in V_t}\mathbf{1}_{\{\sup_{s\leq t}X_s^{(u)}\leq K\}}}{N_t} =0, \quad \textit{in probability}. $$
\item Under Condition \ref{LNinfty} and if $\lambda(x)>0$ for any $x \in \R_+^*$,
$$ \lim_{t \to \infty} \mathbf{1}_{\{N_t \geq 1\}} \frac{\sum_{u \in V_t}\mathbf{1}_{\{X_t^{(u)}<\infty\}}}{N_t}=0, \quad a.s. $$
\end{enumerate}
\end{prop}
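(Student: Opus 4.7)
The unifying tool is the Many-to-One formula (Proposition~\ref{pro_MtO}), which converts sums over cells into expectations of the spine process $Y$:
\[
\E_{\delta_{x_0}}\Big[\sum_{u\in V_t} F(X_s^u,s\leq t)\Big] = e^{(b-d)t}\,\E_{x_0}[F(Y_s,s\leq t)].
\]
Since $\E[N_t]=e^{(b-d)t}$, the exponential factor formally cancels against $1/N_t$, and the task in each item is to make this cancellation rigorous through an auxiliary bound, combined with moment or escape information on $Y$ derived from the assumption.

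\medskip
\noindent\textbf{Part (1).} I apply Markov's inequality cell by cell, $\mathbf{1}_{\{x\geq K\}}\leq\sqrt{x/K}$, to reduce the statement to the uniform-in-$t$ estimate of
\[
\phi_t:=\E_{\delta_0}\!\Big[\mathbf{1}_{\{N_t\geq 1\}}\frac{\sum_{u\in V_t}\sqrt{X_t^u}}{N_t}\Big].
\]
I would control $\phi_t$ by computing its time derivative through the generator of the population process of Proposition~\ref{pro_exi_uni}. The intra-cell dynamics and reservoir reinfections contribute at most $\tfrac{\tilde g}{2}\phi_t + O(1)$ (since $\mathcal{G}\sqrt{x}\leq g(x)/(2\sqrt{x})$ and $g(x)+\E[\mathcal{I}]\lambda(x)\leq\tilde g x$, while the concave jump and diffusion contributions to $\mathcal{G}\sqrt{x}$ are non-positive); each division replaces $\sqrt{X^u}$ by $\sqrt{X^u}(\sqrt{\Theta}+\sqrt{1-\Theta})$ while incrementing $N_t$ by one, producing a net dilution of $\phi_t$ at rate $b(2-\E[\sqrt{\Theta}+\sqrt{1-\Theta}])\,N_t/(N_t+1) \geq b(2-\sqrt{2})\,N_t/(N_t+1)$; deaths contribute zero on average because the dying cell is uniformly chosen. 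Combining, and refining the dilution on the typical event $\{N_t\text{ large}\}$ while handling $\{N_t\leq N_0\}$ via the extinction probability, one obtains $\phi_t'\leq -\alpha\phi_t + C$ for some $\alpha>0$ as soon as $\tilde g<b$, which yields the desired uniform bound on $\phi_t$ and hence the claim $\leq C/\sqrt K$.

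\medskip
\noindent\textbf{Parts (2) and (3).} These use the same template in reverse: the spine's growth translates, via Many-to-One, into escape at the population level. For~(2), the ODE for $\E[Y_t]$ gives $\E[Y_t]\geq c\,e^{(\tilde g-b)t}\to\infty$, and hence $\P(\sup_{s\leq t} Y_s \leq K)\to 0$ for every $K$. Many-to-One then yields $\E[\sum_u\mathbf{1}_{\{\sup_{s\leq t}X_s^u\leq K\}}] = e^{(b-d)t}\,\P(\sup_{s\leq t} Y_s \leq K) = o(e^{(b-d)t})$, and the almost sure concentration of $N_t/e^{(b-d)t}$ around a positive limit $W_\infty$ on the survival event upgrades this to convergence in probability of the stated ratio. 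For~(3), Condition~\ref{LNinfty} (made precise in \cite{companion}) ensures that the spine explodes in finite time with positive probability; combined with $\lambda(x)>0$ for $x\in\R_+^*$, which makes the reinfection clock of each lineage fire infinitely often while the cell is finite, a Borel-Cantelli argument forces $\P(Y_t=\infty)\to 1$. Applying Many-to-One to $F=\mathbf{1}_{\{X_t<\infty\}}$ and using $L^2$-concentration of $N_t/e^{(b-d)t}$ along a discrete time grid then yields the almost sure convergence stated.

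\medskip
\noindent\textbf{Main obstacle.} The delicate step is the uniform-in-$t$ bound on $\phi_t$ in part~(1): a naive Cauchy-Schwarz between $\E[\sum_u\sqrt{X_t^u}] = e^{(b-d)t}\,\E[\sqrt{Y_t}]$ and $\E[\mathbf{1}_{\{N_t\geq 1\}}/N_t]$ introduces a spurious $\sqrt{t}$ factor in the supercritical regime $b>d$ (already visible for the Yule process, where $\E[\mathbf{1}/N_t]\asymp t\,e^{-(b-d)t}$). The correct estimate therefore must proceed through the coupled generator of $(N_t,\sum_u\sqrt{X_t^u})$ and handle separately the small-$N_t$ regime, where the division-induced dilution is weakest.
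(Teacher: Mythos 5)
Your reduction of part (1) via $\mathbf{1}_{\{x\geq K\}}\leq\sqrt{x/K}$ is fine, but the uniform-in-$t$ bound on $\phi_t$ --- which is the whole content of the claim --- is not actually established, and the route you sketch does not close under the stated hypothesis. The dilution coefficient you compute, $b\bigl(2-\E[\sqrt{\Theta}+\sqrt{1-\Theta}]\bigr)N_t/(N_t+1)$, is only at least $b(2-\sqrt{2})/2$ when $N_t$ is small, so the pointwise drift $\tilde g/2-b(2-c)N_t/(N_t+1)$ need not be negative when $\tilde g<b$ (e.g.\ $\Theta\equiv 1/2$, $N_t=1$, $\tilde g$ close to $b$); the ``refinement on $\{N_t\ \text{large}\}$'' you defer is precisely the missing argument, and it is not routine because $N_t$ and the loads $X_t^u$ are correlated through the tree, so you cannot multiply marginal estimates, and because Proposition \ref{pro_exi_uni} only gives the martingale problem for bounded $C^2$ test functions of linear functionals, so differentiating the unbounded ratio $\sum_u\sqrt{X_t^u}/N_t$ requires a localisation/approximation step you do not provide. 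The paper proceeds differently: it proves $\sup_{t\geq0}\E[Y_t]\leq\bar y$ on the spine (comparison with the auxiliary process \eqref{EDS_barY} whose lysis-reinfection rate is frozen at $r(\bar y)$, plus a bootstrap in $t$ to handle the self-referential rate $r(\E[Y_s])$), applies \eqref{MtO} and Markov, and then treats $1/N_t$ not by Cauchy--Schwarz (your ``spurious $\sqrt t$'' worry) but by splitting on $\{N_te^{-(b-d)t}\leq K^{-1/2}\}$, whose probability is $O(K^{-1/2})$ uniformly in $t$ by the explicit geometric law of $N_t$, while on the complement $1/(N_te^{-(b-d)t})\leq\sqrt K$. (If you want to keep a $\sqrt{\cdot}$-moment, doing your computation on the spine, where divisions occur at rate $2b$ with factor $\Theta$, gives $\sup_t\E[\sqrt{Y_t}]<\infty$ under $\tilde g<b$ with no small-$N_t$ issue, and the same splitting then concludes.)

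Parts (2) and (3) contain genuine logical gaps. In (2), ``$\E[Y_t]\geq c\,e^{(\tilde g-b)t}\to\infty$, hence $\P(\sup_{s\leq t}Y_s\leq K)\to0$'' is a non sequitur: divergence of the first moment is compatible with the path staying below $K$ with non-vanishing probability, and moreover with $Z_0=\delta_0$ the spine starts from $0$, so the constant $c$ extracted from the drift inequality is $0$. This is exactly where the paper's proof does its work: the moment identity only yields $\P_y(\tau^+(K)=\infty)<1$, which is then upgraded to $\P(\tau^+(K)<\infty)=1$ and to the quantitative bound \eqref{quantif_sup} via monotonicity of the flow in the initial condition and a renewal/restart argument over time blocks; this step cannot be skipped. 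In (3), ``a Borel--Cantelli argument forces $\P(Y_t=\infty)\to1$'' hides the two real ingredients: a lower bound on the explosion probability uniform over large starting points (\eqref{lowerbound_explo}, extracted from the proof in \cite{li2017general}/\cite{companion}), and a lower bound, uniform in the current state, on the probability of climbing above the level $x_1$ within a fixed time, which the paper obtains by comparison with the constant-rate process \eqref{EDS_tildetildeY} and a control of the martingale part; only then does a renewal argument give $\P_{x_0}(\tau^+(\infty)<\infty)=1$. The fact that the reinfection clock ``fires infinitely often'' is not enough, since its rate $\lambda(Y_{s-})$ is state-dependent and can be arbitrarily small near $0$, so Borel--Cantelli needs precisely the uniformity the paper constructs. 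Finally, the almost-sure upgrade in (3) is not a mere $L^2$-concentration of $N_te^{-(b-d)t}$ along a grid; the paper relies on a dedicated argument taken from \cite{marguet2020long} and \cite{BT11}, which your sketch does not reproduce.
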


Notice that point {\it{(1)}} implies that for any $t\geq 0$
$$ \lim_{K \to \infty}\mathbf{1}_{\{N_t \geq 1\}} \frac{\sum_{u \in V_t}\mathbf{1}_{\{X_t^{(u)}\geq K\}}}{N_t} =0, \quad \textit{in probability}.$$

Thus, in the first case, the proportion of cells containing a large amount of parasites is small, when considering any given time. In the second case, on the contrary, when the time is large, the lineage of each cell alive at time $t$ has contained a high number of parasites at a given time with high probability. 
The third case is even more extreme as the quantity of parasites in most of the cells reaches infinity.
One could imagine that the presence of a large number of parasites in a cell disturbs the functioning of this latter. The second and third cases could therefore correspond to the death of a large fraction of the cells for example if we allowed the cell death rate to depend on the quantity of parasites.

Interestingly, the parasite growth rate $g(\cdot)$ and the infection by the reservoir $\E[\mathcal{I}]\lambda(\cdot)$ have a similar role in the quantity of parasites on the long term (increasing any of them has the same impact on the criteria of points $(1)$ and $(2)$). This highlights the importance of taking into account reinfections by the reservoir (or similarly the immigration of infected individuals) when studying an epidemics.

\subsection{The case without reservoir}

We now wonder if the cell population is able to get rid of the infection in the case where it is not reinfected by new contacts with the parasites reservoir after the first infection of the population.

\begin{prop} \label{prop_tendvers0}
Let $Z$ and $N$ be defined as in Proposition \ref{pro_exi_uni}.
Assume that $\lambda(\cdot) \equiv 0$.
\begin{enumerate}
\item Assume that $g$ is a concave function and there exist $\alpha,\eta >0$ such that
\begin{equation} \label{rho_neg} \rho(x) \leq - (\alpha x \wedge \eta), \quad x \geq 0. \end{equation}
Then for every $\eps>0$ and $Z_0 = \delta_{x_0}$ with $x_0<\infty$,
$$ \lim_{t \to \infty} \mathbf{1}_{\{N_t \geq 1\}} \frac{\sum_{u \in V_t}\mathbf{1}_{\{X_t^{(u)}> \eps\}}}{N_t}=0, \quad \textit{in probability}. $$
\item Under Condition \ref{LNinfty} and if $r(x_0)>0$,
$$ \lim_{t \to \infty} \mathbf{1}_{\{N_t \geq 1\}} \frac{\sum_{u \in V_t}\mathbf{1}_{\{X_t^{(u)}<\infty\}}}{N_t}=0, \quad \text{a.s. when } Z_0= \delta_{x_0}. $$
\end{enumerate}
\end{prop}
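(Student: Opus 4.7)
The strategy for both parts is to apply the Many-to-One formula (Proposition \ref{pro_MtO}) to transfer the claims to the spinal process $Y$ of \eqref{EDS_Y}. Since $\lambda\equiv 0$, that spine is driven only by the internal dynamics, the symmetric fragmentation at rate $2b$, and reinfection jumps at the deterministic rate $r(\E_{x_0}[Y_s])$. Numerators are controlled via Many-to-One; the random denominator $N_t$ is then dealt with by truncation (for convergence in probability) or by a supermartingale argument (for the almost-sure statement).

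For part (1), let $m_t := \E_{x_0}[Y_t]$. Taking expectations in \eqref{EDS_Y}, the martingale terms vanish; using the symmetry of $\kappa$ (so $\E[\Theta]=1/2$) and $\lambda\equiv 0$,
\begin{align*}
\dot m_t = \E_{x_0}[g(Y_t)] - b\,m_t + r(m_t)\,\E[\mathcal{P}].
\end{align*}
Concavity of $g$ and Jensen's inequality yield $\dot m_t\leq \rho(m_t)\leq -(\alpha m_t\wedge \eta)$, so by ODE comparison $m_t\to 0$, exponentially once $m_t<\eta/\alpha$. Many-to-One combined with Markov then gives
\begin{align*}
\E_{\delta_{x_0}}\!\Bigl[\textstyle\sum_{u\in V_t}\mathbf 1_{\{X_t^u>\varepsilon\}}\Bigr] = e^{(b-d)t}\,\P_{x_0}(Y_t>\varepsilon) \leq \varepsilon^{-1}e^{(b-d)t}m_t,
\end{align*}
and the ratio is handled by splitting on $\{N_t\leq A_t\}$ versus $\{N_t>A_t\}$ with $A_t := \varepsilon^{-1}e^{(b-d)t}\sqrt{m_t}$:
\begin{align*}
\P\!\Bigl(\mathbf 1_{\{N_t\geq 1\}}\tfrac{\sum_{u\in V_t}\mathbf 1_{\{X_t^u>\varepsilon\}}}{N_t}>\delta\Bigr) \leq \P(1\leq N_t\leq A_t) + \delta^{-1}\sqrt{m_t}.
\end{align*}
The second summand vanishes. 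For the first, if $b>d$ one invokes the standard martingale convergence $e^{-(b-d)t}N_t\to W$ a.s.\ (with $W>0$ on survival, the extinction event contributing only up to the a.s.\ finite extinction time); if $b\leq d$ then $\P(N_t\geq 1)\to 0$ directly.

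For part (2), the key input is that Condition \ref{LNinfty} combined with $r(x_0)>0$ (so that the reinfection drift of \eqref{EDS_Y} does not vanish) yields, through the explosion analysis of \cite{companion}, $\P_{x_0}(Y_t<\infty)\to 0$ as $t\to\infty$. Writing $F_t := \sum_{u\in V_t}\mathbf 1_{\{X_t^u<\infty\}}$, Many-to-One gives $\E_{\delta_{x_0}}[F_t]=e^{(b-d)t}\P_{x_0}(Y_t<\infty)=o(e^{(b-d)t})$, so $e^{-(b-d)t}F_t\to 0$ in $L^1$. For the a.s.\ convergence the crucial structural observation is that $\infty$ is absorbing for $\mathcal G$ and $\theta\cdot\infty=\infty$, so every descendant of an exploded cell remains exploded; consequently $F_{t+s}$ counts only descendants of cells in $F_t$, and since divisions and deaths are trait-blind,
\begin{align*}
\E[F_{t+s}\mid\mathcal F_t]\leq e^{(b-d)s}F_t.
\end{align*}
Thus $e^{-(b-d)t}F_t$ is a non-negative supermartingale; Doob's theorem produces an a.s.\ limit which must vanish by $L^1$ convergence and Fatou. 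Dividing by $e^{-(b-d)t}N_t\to W$ (the extinction event handled via $\mathbf 1_{\{N_t\geq 1\}}\to 0$ a.s.) delivers the conclusion.

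The recurring obstacle is the random denominator $N_t$: Many-to-One delivers only first-moment control of numerators. For part (1) the deterministic truncation above is the cleanest way to bypass this. For part (2) the supermartingale structure is crucial, and rests on the hereditary nature of explosion together with the deterministic form of the reinfection rate in \eqref{EDS_Y} (so that cell counts in sub-trees grow no faster than $e^{(b-d)s}$ in expectation). The most delicate analytic ingredient is the transfer of the explosion criterion of \cite{companion} to the non-homogeneous spine \eqref{EDS_Y} under the hypothesis $r(x_0)>0$.
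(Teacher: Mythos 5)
Your part (1) follows essentially the paper's route: you derive decay of $m_t=\E_{x_0}[Y_t]$ from the spine equation using concavity of $g$, Jensen and \eqref{rho_neg}, then transfer via Many-to-One and control the random denominator through the growth of $N_te^{-(b-d)t}$; your truncation at $A_t$ is an acceptable substitute for the paper's use of the limit law of $N_te^{-(b-d)t}$. One caveat: you ``take expectations in \eqref{EDS_Y}'' as if the local martingale terms were martingales and $m_t$ were finite and differentiable; this needs the localization the paper carries out (stopping at $\tau^+(x)$, non-explosion from Lemma \ref{lem_reach_infty} under \ref{SNinfty}, which is implied by \eqref{rho_neg}, then Fatou, monotone convergence and Gr\"onwall to first get $m_t<\infty$) before Jensen and the ODE comparison can be invoked. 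Your supermartingale argument for the almost sure statement in part (2) (hereditary explosion, $\E[F_{t+s}\mid\mathcal F_t]\le e^{(b-d)s}F_t$, Doob plus the $L^1$ limit) is correct given its input and is a reasonable self-contained alternative to the paper's citation of \cite{marguet2020long} and \cite{BT11}.

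The genuine gap is the input itself: you assert that Condition \ref{LNinfty} together with $r(x_0)>0$ yields $\P_{x_0}(Y_t<\infty)\to 0$ ``through the explosion analysis of \cite{companion}'', but this is exactly the substantive claim to be proved, and the results of \cite{companion} (or \cite[Theorem 2.8]{li2017general}) only give a \emph{positive} probability of explosion, and only from \emph{large} initial values. Here $Y_0=x_0$ is finite, $\lambda\equiv 0$, and the only mechanism lifting the spine is the reinfection at rate $r(\E_{x_0}[Y_t])$, which depends on the unknown mean and could a priori decay towards $r(0)=0$; nothing in your argument guarantees that $Y$ ever reaches the region where the explosion bound applies, let alone that it explodes with probability one (which is what $\P_{x_0}(Y_t<\infty)\to 0$ requires). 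The paper closes this loop in three steps: (i) a small-time estimate -- controlling the Brownian and compensated-jump fluctuations, forbidding divisions on $[0,\delta]$, and using continuity of $r$ so that the compound Poisson reinfection runs at rate at least $r(x_0)/2>0$ -- gives $\P_{x_0}(\tau^+(x_1)<T_0)>0$ for the level $x_1$ of \eqref{lowerbound_explo}; (ii) combined with the uniform lower bound \eqref{lowerbound_explo} this yields $\P_{x_0}(\tau^+(\infty)\le T_0+T_1)>0$, hence $\E_{x_0}[Y_t]=\infty$ and therefore a reinfection rate equal to $r(\infty)>0$ for all $t\ge T_0+T_1$; (iii) a renewal argument as in the proof of Proposition \ref{cor_esperance}(3) then upgrades the positive probability to $\P_{x_0}(\tau^+(\infty)<\infty)=1$. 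Without an argument of this kind your proof of part (2) does not get off the ground: the statement you flag at the end as ``the most delicate analytic ingredient'' is precisely the step that is missing.
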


A wide range of long time behaviours are thus possible for the class of processes under consideration.

The concavity of $g$ seems a reasonable assumption. Indeed it means that there is a negative interaction between parasites, which may be due for instance to competition for space, or to the activation of the immune system. A logistic growth rate for example meets this assumption.

\subsection{Coming down from infinity}

We now assume that parasites from cell lyses do not reinfect other cells, for example because the immune system destroys them before they can attack other cells. However, we suppose that the cell population is frequently in contact with the parasite living in another environment (the reservoir). We investigate under what conditions on the frequency and rate of infection and on the dynamics of the parasites within the cells the cell population is able to control the size of the parasites proliferation.\\

Recall the definition the function $ \mathcal{D}(a,.)$ in \eqref{def_mathcalD}.
Then we have the following result.

\begin{prop} \label{theo_CDI}
Let the process $Z$ be defined as in Proposition \ref{pro_exi_uni}.
Assume that $r(\cdot) \equiv 0$.
\begin{enumerate}
\item If there exist $a \in \mathcal{A}$ and a non-negative function $f$ such that
\begin{equation}\label{cond_rest_infty}
\mathcal{D}(a,x) \geq f(x)+ o(\ln x), \quad (x \to \infty),
\end{equation}
then for every $K,t\geq 0$,
$$ \lim_{x \to \infty} \E_{\delta_x} \left[\mathbf{1}_{\{N_t \geq 1\}} \frac{\sum_{u \in V_t}\mathbf{1}_{\{X_t^{(u)}\leq K\}}}{N_t} \right]=0. $$
\item If there exist $0<a<1$ and $\eta>0$, such that
\begin{equation}\label{cond_comdown_infty}
\mathcal{D}(a,x) \leq -\ln x(\ln \ln x)^{1+\eta}, \quad (x \to \infty),
\end{equation}
then for every $x,t\geq 0$,
$$ \lim_{K \to \infty}\E_{\delta_x} \left[\mathbf{1}_{\{N_t \geq 1\}} \frac{\sum_{u \in V_t}\mathbf{1}_{\{X_t^{(u)}\geq K\}}}{N_t} \right]=0. $$
\end{enumerate}
\end{prop}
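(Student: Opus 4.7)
The proof reduces both statements to one-dimensional SDE properties of the spinal process $Y$ via the Many-to-One formula. When $r \equiv 0$, the self-referential term $r(\E_{x_0}[Y_s])$ vanishes from \eqref{EDS_Y}, so $Y$ becomes a time-homogeneous Markov jump-diffusion on $\bar{\mathbb{R}}_+$ whose dynamics combine the intrinsic parasite evolution $(g,\sigma^2,\pi)$, the $\Theta$-fragmentation jumps at rate $2b$, and the reservoir infection jumps at rate $\lambda(\cdot)$.

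The starting point is the deterministic bound
$$ \mathbf{1}_{\{N_t \geq 1\}}\,\frac{\sum_{u \in V_t}\mathbf{1}_A(X_t^u)}{N_t}\;\leq\;\sum_{u \in V_t}\mathbf{1}_A(X_t^u), $$
valid for any measurable $A \subset \bar{\mathbb{R}}_+$ (trivial on $\{N_t = 0\}$; on $\{N_t \geq 1\}$ the ratio is at most the numerator since the denominator is $\geq 1$). Taking expectations under $\P_{\delta_x}$ and applying Proposition \ref{pro_MtO} with $F((X_s^u)_{s \leq t}) = \mathbf{1}_A(X_t^u)$ yields
$$ \E_{\delta_x}\!\left[\mathbf{1}_{\{N_t \geq 1\}}\,\frac{\sum_{u \in V_t}\mathbf{1}_A(X_t^u)}{N_t}\right]\;\leq\;e^{(b-d)t}\,\P_x\!\left(Y_t \in A\right). $$
Part $(1)$ then reduces to $\lim_{x\to\infty}\P_x(Y_t \leq K) = 0$ (take $A = [0,K]$), and part $(2)$ to $\lim_{K\to\infty}\P_x(Y_t \geq K) = 0$ (take $A = [K,\infty)$).

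For part $(1)$, the quantity $\mathcal{D}(a,\cdot)$ in \eqref{def_mathcalD} arises from applying the generator of $Y$ to the Lyapunov function $V(x) = x^{1-a}$, $a \in \mathcal{A}$: a direct It\^o computation gives $\mathcal{L}_Y V(x)/V(x) = (1-a)\mathcal{D}(a,x) + 2b(\E[\Theta^{1-a}]-1)$. Since $1-a < 0$, condition \eqref{cond_rest_infty} makes this bracket very negative for large $x$, so that, up to an exponential correction, $V(Y_t) = Y_t^{1-a}$ is a supermartingale; Markov's inequality then gives $\P_x(Y_t \leq K) \leq e^{Ct}(x/K)^{1-a} \to 0$ as $x \to \infty$. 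For part $(2)$, condition \eqref{cond_comdown_infty} is the "coming down from infinity" criterion established in the companion paper \cite{companion}, which in particular ensures non-explosion of $Y$ from any finite starting point; hence $Y_t < \infty$ a.s.\ and monotone convergence yields $\P_x(Y_t \geq K) \to 0$.

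The main obstacle is thus the SDE analysis of $Y$ producing the two one-dimensional assertions above. The interplay of diffusive noise $\sigma^2$, positive jumps of intensity $p(\cdot)\pi$ with only $(z \wedge z^2)$-integrability, fragmentation jumps with possibly heavy-tailed $\Theta^{1-a}$ moments on the boundary $a = 1$, and reservoir jumps at rate $\lambda(\cdot)$ forces a careful choice of Lyapunov function and a delicate bookkeeping of remainder terms—this is precisely where the explicit form of $\mathcal{D}(a,x)$ and the threshold $\ln x(\ln\ln x)^{1+\eta}$ come from, and these ingredients are provided by \cite{companion}. Granted those results, the content of the present proof is just the Many-to-One reduction displayed above.
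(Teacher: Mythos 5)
Your Many-to-One reduction is fine (for fixed $t$ the crude bound ``ratio $\leq$ numerator'' and the constant factor $e^{(b-d)t}$ are harmless), but your proof of part $(1)$ has a genuine gap at the one-dimensional step. You claim that, up to a constant exponential correction, $Y_t^{1-a}$ is a supermartingale, so that $\P_x(Y_t\le K)\le e^{Ct}(x/K)^{1-a}$. Condition \eqref{cond_rest_infty} does not give this: it only yields $\mathcal D(a,x)\ge f(x)+o(\ln x)$, so the generator identity you invoke gives a correction of order $(a-1)\,o(\ln y)$ in the exponent, which is \emph{unbounded} in $y$ (take e.g.\ $\mathcal D(a,x)=-\sqrt{\ln x}$, admissible with $f\equiv 0$); no constant $C$ makes $e^{-Ct}Y_t^{1-a}$ a supermartingale. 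In addition, for $a>1$ the function $y\mapsto y^{1-a}$ blows up at $0$, where the terms $\sigma^2(x)/x^2$ and $g(x)/x$ entering $\mathcal D(a,\cdot)$ are not controlled and where $Y$ may even be absorbed, so the unstopped moment bound $\E_x[Y_t^{1-a}]\le e^{Ct}x^{1-a}$ cannot be justified and Markov's inequality has nothing to act on. This is exactly why the paper does not argue this way: it bounds $\P_x(Y_t\le \mathfrak b)\le \P_x(\tau^-(\mathfrak b)\le t)$ and proves \eqref{cas1_lemCDI} by a multi-scale argument with the \emph{stopped} martingale $Z^{(a)}$ (choosing $\alpha<(a-1)/4\mathfrak d$ with $G_a(x)\ge -\alpha\ln x$ above $\mathfrak b$ and iterating over the levels $\eps^{-2^n}$, with extra care for the negative fragmentation jumps), adapting \cite[Theorem 2.13]{li2017general}. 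These hitting-time estimates are not results you can simply quote from \cite{companion}, contrary to your closing remark, so part $(1)$ as written is not proved.

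Part $(2)$ of your proposal, by contrast, is correct and takes a genuinely shorter route than the paper. Since \eqref{cond_comdown_infty} implies the non-explosion criterion (this is how the paper itself obtains \eqref{esp_nulle}, via \cite[Theorem 2.8(i)]{li2017general} or \cite[Theorem 4.1 i)]{companion}), one has $\P_x(Y_t\ge K)\downarrow \P_x(Y_t=\infty)=\P_x(\tau^+(\infty)\le t)=0$ as $K\to\infty$, and the crude Many-to-One bound finishes the fixed-$x$ statement. The paper instead establishes the uniform bound $\sup_{x>1}\P_x(Y_t>\mathfrak b^{\,n})\le 2\eps$ through \eqref{quantif_CDI} and the $\tau^+(\mathfrak b^{\,n})$ argument; that is the real ``coming down from infinity'' content (uniformity in the initial condition), which your argument does not recover, but it is more than the displayed limit requires.
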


There is thus an interplay between the increase of the number of parasites (by growth or reinfection by the reservoir) and the fluctuations of the number of parasites (characterized by $\sigma$ and $p$), which tend to decrease the number of parasites. This interplay is fully characterized by the class of functions $(\mathcal{D}(a,.), a \neq 1)$.

\section{Proofs}

We first explain how the proofs of Proposition 1 in \cite{palau2018branching} and Theorem 3.1 in \cite{marguet2016uniform} must be modified to obtain Proposition \ref{pro_MtO}.

\begin{proof}[Proof of Proposition \ref{pro_MtO}] \label{proof_adaptation}
The existence and strong unicity of a solution to \eqref{EDS_Y}
are a consequence of Proposition 1 in \cite{palau2018branching}, and accordingly, $(Y_t, t \geq 0)$ is a $[0,\infty]$-valued process, which satisfies \eqref{EDS_Y} up to the time $\tau_n := \inf\{t \geq 0 : Y_t \geq n\}$ for all $n \geq 1$, and $Y_t = \infty$ for all
$t \geq \tau_\infty := \lim_{n\to \infty} \tau_n$. Notice that the process $Y$ is not Markovian, as $r(\E[Y_t])$ is time dependent.
In the statement of Proposition 1 in \cite{palau2018branching}, the functions do not depend on time, unlike the case of our process. However this additional
dependence does not bring any modification to the proofs (see Appendix B in \cite{marguet2020long} for more details).

The proof of the Many-to-One formula \eqref{MtO} is obtained by a slight modification of the proof of Theorem 3.1 in \cite{marguet2016uniform}. There are essentially three differences with the result of Theorem 3.1 in \cite{marguet2016uniform}. First a simplification: the division rate $b$ does not depend on the quantity of parasites in the cells, which makes equations simpler. Second, the reinfection by parasites in the medium (resp. in the reservoir), which can be seen, at time $t$, as the death of a cell $u \in V_t$ with a quantity $X_t^u$ of parasites and the birth of a cell with a quantity $X_t^u + \mathcal{I}$ of parasites, $\mathcal{I}$ following the distribution $\mathfrak{I}$ (resp. a quantity $X_t^u + \mathcal{P}$ of parasites, $\mathcal{P}$ following the distribution $\mathfrak{P}$). The point is that, contrarily to the case considered in \cite{marguet2016uniform}, this additional division rate is time inhomogeneous, and unlike the division rate $b$ it may depend on the quantity of parasites. However it does not impact the mean number of cells, and the adaptation of the proof to this case is not difficult. The last difference is that we allow for the death of cells without producing any daughter cell. Here again the adaptation is fairly simple, because this death rate, $d$, does not depend on the quantity of parasites in the cell. To fix ideas, these modifications would lead to the following equivalent of Equation (3.8) in \cite{marguet2016uniform}:
\begin{align*}
\mu_{t_0,s}(x_0,f)=& f(t_0,x_0) + \int_{t_0}^s \int_{\R_+} \left[ \mathcal{G}f(r,x)-(b-d)f(r,x)+ \partial_rf(r,x) \right]\mu_{t_0,r}(x_0,dx)dr\\
&+ \int_{t_0}^s \int_{\R_+} \left[ 2b \int_0^1 f(r,\theta x)\kappa(d\theta)-(b+d)f(r,x) \right] \mu_{t_0,r}(x_0,dx)dr\\
&+ \int_{t_0}^s \int_{\R_+} \left[r(\E[Y_r|Y_{t_0}=x_0])\int_{\R_+}(f(r,x+\mathfrak{p})-f(r,x))\mathfrak{P}(d\mathfrak{p}) \right]\mu_{t_0,r}(x_0,dx)dr\\
&+ \int_{t_0}^s \int_{\R_+} \left[\lambda(x)\int_{\R_+}(f(r,x+\mathfrak{i})-f(r,x))\mathfrak{I}(d\mathfrak{i}) \right]\mu_{t_0,r}(x_0,dx)dr.
\end{align*}
We see that the terms involving $d$ cancel out. This explains why the death rate does not impact the dynamics of $Y$. Intuitively this is due to the fact that the death rate is constant. Hence all individuals die with the same probability and the behaviour of a 'typical individual' is not modified.
\end{proof}

We may now prove the results on the long time behaviour of the infection presented in Section \ref{section_results}. \\

Let us recall the definition of $(Y_t, t \geq 0)$ in \eqref{EDS_Y}, and introduce the process $(\bar{Y}_t, t \geq 0)$, solution to the SDE
\begin{align} \nonumber \label{EDS_barY}
\bar{Y}_t =&  \int_0^t g(\bar{Y}_s)ds
+\int_0^t\sqrt{2\sigma^2 (\bar{Y}_s)}dB_s +
\int_0^t\int_0^{p(\bar{Y}_{s-})}\int_{\mathbb{R}_+}z\widetilde{Q}(ds,dx,dz)\\& \nonumber+\int_0^t\int_0^{2b}\int_0^1(\theta-1)\bar{Y}_{s-}N(ds,dx,d\theta)\\&
+\int_0^t\int_0^{r(\bar{y})}\int_{\mathbb{R}_+}\mathfrak{p}N_1(ds,dx,d\mathfrak{p})
+\int_0^t\int_0^{\lambda( \bar{Y}_{s-})}\int_{\mathbb{R}_+}\mathfrak{i}N_2(ds,dx,d\mathfrak{i}),
\end{align}
where the Brownian $B$ and the Poisson random measures $Q$, $N$, $N_1$ and $N_2$ are the same as in the SDE \eqref{EDS_Y}, and $\bar{y}$ is a non-negative real number, which will be chosen later.

Moreover, for the sake of readability we introduce the following notations:
\begin{itemize}
\item For $x>0$, 
\begin{equation}\label{def_tau}\tau^+(x):= \inf\{ t \geq 0, Y_t\geq x \} \quad \text{and} \quad \tau^-(x):= \inf\{ t \geq 0, Y_t\leq x \}.\end{equation}
\item 
For $x>0$, 
\begin{equation*}\bar{\tau}^+(x):= \inf\{ t \geq 0, \bar{Y}_t\geq x \} \quad \text{and} \quad \bar{\tau}^-(x):= \inf\{ t \geq 0, \bar{Y}_t\leq x \}.\end{equation*}
\item \begin{equation}\label{def_tau_infty} \tau^+(\infty) = \lim_{x \to \infty} \tau^+(x) \quad \text{and} \quad 
 \bar{\tau}^+(\infty) = \lim_{x \to \infty} \bar{\tau}^+(x). \end{equation}
\end{itemize}

Before giving the main proofs, we will derive a lemma, which will be useful on several occasions. It concerns the probability for the processes $Y$ and $\bar{Y}$ to reach infinity in a finite time.

\begin{lemma}\label{lem_reach_infty}
\begin{itemize}
\item[i)] If Condition \ref{SNinfty} holds, then 
$$\mathbb{P}_x\left(\tau^+(\infty)<\infty\right)=\mathbb{P}_x\left(\bar{\tau}^+(\infty)<\infty\right)=0 \quad \text{for all} \quad x>0.$$ 
\item[ii)] If Condition \ref{LNinfty} holds then $\mathbb{P}_x\left(\tau^+(\infty)<\infty\right) >0$ and $\mathbb{P}_x\left(\bar{\tau}^+(\infty)<\infty\right)>0$ for all large enough $x>0$.
\end{itemize}
\end{lemma}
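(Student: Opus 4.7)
The plan is to reduce both statements to the explosion/non-explosion dichotomy established in the companion paper \cite{companion} for an auxiliary process $\hat Y$ obtained from \eqref{EDS_Y} by suppressing the reinfection term coming from cell lyses (the $N_1$-integral), and then to transfer the conclusion to $Y$ and $\bar Y$ using that $r$ is bounded. I would build $\hat Y$ on the same probability space as $Y$ and $\bar Y$, driven by the same $B$, $Q$, $N$ and $N_2$. Since the function $\mathcal{D}(a,\cdot)$ defined in \eqref{def_mathcalD} involves only the coefficients $g$, $\sigma$, $p$ and $\lambda$, which are precisely those of $\hat Y$, the criteria of \cite{companion} apply as a black box: under \ref{SNinfty}, $\hat Y$ does not explode in finite time from any initial value, while under \ref{LNinfty}, $\P_x(\hat \tau^+(\infty) \leq t_0) > 0$ for some fixed $t_0 > 0$ and every $x$ large enough.

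For part $i)$, I would argue inductively along the successive $r$-jump times $0 = T_0 < T_1 < T_2 < \cdots$. Since $r$ is bounded by $\|r\|_\infty$, the thinned $r$-jump process is dominated by a homogeneous Poisson process of rate $\|r\|_\infty$, so $T_n \to \infty$ almost surely. On each interval $[T_n, T_{n+1})$, $Y$ (respectively $\bar Y$) coincides pathwise with a version of $\hat Y$ started at the post-jump value, which is finite because $\mathfrak{P}$ has finite first moment and the previous value is finite by the induction hypothesis. Non-explosion of $\hat Y$ from any finite initial condition then gives $Y_{T_{n+1}^-} < \infty$ a.s., and the conclusion follows by letting $n \to \infty$.

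For part $ii)$, the key tool is a coupling on the event that no $r$-jump occurs before $t_0$. Realizing the $r$-jumps of $Y$ by thinning a Poisson point measure $\Pi$ of intensity $ds\,du\,\mathfrak{P}(d\mathfrak{p})$ on $\R_+ \times [0,\|r\|_\infty] \times \R_+$, independent of $(B, Q, N, N_2)$, the event $\mathcal{E} := \{\Pi([0,t_0]\times[0,\|r\|_\infty]\times \R_+) = 0\}$ has probability $e^{-\|r\|_\infty t_0} > 0$ and is independent of $\hat Y$; moreover on $\mathcal{E}$ one has $Y_s = \hat Y_s$ for every $s \leq t_0$. Consequently, for $x$ large enough,
\[
\P_x(\tau^+(\infty) \leq t_0) \;\geq\; \P_x(\hat \tau^+(\infty) \leq t_0)\, e^{-\|r\|_\infty t_0} \;>\; 0,
\]
and the same reasoning applies to $\bar Y$, whose $r$-rate reduces to the constant $r(\bar y) \leq \|r\|_\infty$.

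The step I expect to be most delicate is not the coupling itself but extracting from \cite{companion} the precise formulation needed here: namely, that the non-explosion assertion under \ref{SNinfty} holds uniformly from arbitrary finite starting points (to serve as the restart after each $r$-jump), and that the explosion assertion under \ref{LNinfty} yields positive probability of reaching $\infty$ before a \emph{fixed} finite horizon $t_0$ (so that the independence argument above goes through). Both features are standard by-products of the Lyapunov-function methods behind these criteria, but the proposal ultimately hinges on their being stated explicitly, which I would check and otherwise derive by inspection of the arguments in \cite{companion}.
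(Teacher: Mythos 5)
Your proposal is correct, but it takes a genuinely different route from the paper's. The paper works with the full processes $Y$ and $\bar Y$ directly: it establishes that $\left(Y_{t\wedge T}\right)^{1-a}\exp\left(\int_0^{t\wedge T}G_a^{(s)}(x_0,Y_s)ds\right)$ (and its analogue for $\bar Y$) is a martingale, observes that Conditions \ref{SNinfty} and \ref{LNinfty}, stated for $\mathcal{D}(a,\cdot)$ in \eqref{def_mathcalD}, are equivalent to the corresponding conditions on $G_a^{(t)}$ and $\bar G_a$ in \eqref{eq:Ga1} because the term coming from $r$ is bounded, and then applies Theorem 4.1 i)--ii) of \cite{companion} to $Y$ and $\bar Y$ themselves. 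You instead strip off the $r$-jumps, apply the companion criteria only to the homogeneous process $\hat Y$, and transfer the conclusion by (a) a restart argument along the a.s.\ locally finite, finite-size $r$-jump times for non-explosion, and (b) a positive-probability ``no $r$-jump before the explosion horizon'' coupling for explosion; both steps are sound because the $r$-rate is deterministic in time ($r(\E_{x_0}[Y_s])$, resp.\ the constant $r(\bar y)$), bounded by $\|r\|_\infty$, and carried by $N_1$, which is independent of the noises driving $\hat Y$, so the thinning domination, the pathwise identification between jumps, and the independence of your event $\mathcal{E}$ from $\hat\tau^+(\infty)$ all hold. What the paper's route buys is brevity, at the price of needing the companion's criterion in a form that tolerates the time-inhomogeneous, law-dependent rate; your route only needs the criterion for a homogeneous SDE, at the price of coupling and strong-Markov restart bookkeeping. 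Two small remarks: first, the point you flag as delicate, explosion of $\hat Y$ before a \emph{fixed} horizon, is actually immediate: for each fixed large $x$, $\P_x(\hat\tau^+(\infty)<\infty)>0$ yields some $t_0(x)$ with $\P_x(\hat\tau^+(\infty)\le t_0(x))>0$ by countable additivity, and since the lemma is a pointwise statement in $x$ an $x$-dependent $t_0$ suffices, so no uniform formulation from \cite{companion} is required. Second, $\hat Y$ is not governed only by $g,\sigma,p,\lambda$: it retains the division jumps, which contribute the constant term $-2b\frac{1-\E[\Theta^{1-a}]}{1-a}$ to the companion's criterion (that is, to \eqref{eq:Ga1} with $r\equiv 0$); hence passing from \ref{SNinfty}--\ref{LNinfty}, stated for $\mathcal{D}$, to the hypotheses of \cite{companion} for $\hat Y$ still requires the same harmless-bounded-term observation that the paper makes explicitly for the $r$-term.
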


\begin{proof}
Let us recall the definition of $\mathcal{A}$ in \eqref{def_mathcalA}
and introduce the set of functions $G_a^{(t)}$ given for $a \in \mathcal{A} \cup (0,1)$, $t \geq 0$ and $x_0 \geq 0,x>0$ by
\begin{align}\label{eq:Ga1}
G_a^{(t)}(x_0,x) : = & (a-1)\Bigg(\frac{g(x)}{x}-a\frac{\sigma^2(x)}{x^2}-2b\frac{1-\E[\Theta^{1-a}]}{1-a}-p(x)I_a(x)\nonumber
\\&+ \lambda(x)\E\left[\frac{\left(1+\mathcal{I}/x\right)^{1-a}-1}{1-a}\right]\Bigg)- r(\E_{x_0}[Y_t])\E\left[\left(1+\frac{\mathcal{P}}{x}\right)^{1-a}-1\right],
\end{align}
where $\delta_{x_0}$ is the initial condition of $Z$ and $I_a$ has been defined in 
\eqref{def:Ia}.
For all $b>c>0$, let $T = \tau^-(c)\wedge \tau^+(b)$. Then, for all $a\in\mathcal{A}\cup (0,1)$, the process 
\begin{equation} \label{def_Za}
Z^{(a)}_{t\wedge T}:=\left(Y_{t\wedge T}\right)^{1-a}\exp\left(\int_0^{t\wedge T} G_a^{(s)}\left(x_0,Y_s\right)ds\right)
\end{equation}
is a $\mathcal{F}_t$-martingale. The proof of this property is the same as the proof of \cite[Lemma 5.1]{li2017general} or \cite[Lemma 7.1]{companion}.

Now we see that Condition \ref{LNinfty} is equivalent to 
the existence of $a\in\mathcal{A}$ and $\eta>0$ such that for all $t \geq 0$
$$ G_a^{(t)}(x_0,x) \geq \ln x \left(\ln(\ln x)\right)^{1+\eta},\quad (x\rightarrow +\infty), $$
and that Condition \ref{SNinfty} is equivalent to 
the existence of $a<1$ and a non-negative function $f$ on $\mathbb{R}_+$ such that for all $t\geq 0$,
$$ G_a^{(t)}(x_0,x) \geq f(x)+ o(\ln x),\quad (x\rightarrow +\infty).$$

Indeed, the second to last term in \eqref{eq:Ga1} has no influence as $r$ is a bounded function: this term is thus bounded on $\R_+$.

Introducing similarly the set of functions $\bar{G}_a$, for $a \in \mathcal{A} \cup (0,1)$, $x_0 \leq \bar{y}$ and $x>0$ via
\begin{align*}
\bar{G}_a(x) : = & (a-1)\left(\frac{g(x)}{x}-a\frac{\sigma^2(x)}{x^2}-2b\frac{1-\E[\Theta^{1-a}]}{1-a}-p(x)I_a(x)+ \lambda (x)\E\left[\frac{\left(1+\mathcal{I}/x\right)^{1-a}-1}{1-a}\right]\right)\nonumber\\
&- r(\bar{y})\E\left[\left(1+\frac{\mathcal{P}}{x}\right)^{1-a}-1\right],
\end{align*}
we obtain that for all $a\in\mathcal{A}\cup (0,1)$, the process 
$$
\bar{Z}^{(a)}_{t\wedge T}:=\left(\bar{Y}_{t\wedge T}\right)^{1-a}\exp\left(\int_0^{t\wedge T} \bar{G}_a\left(Y_s\right)ds\right)
$$ 
is a $\mathcal{F}_t$-martingale, and we have the same equivalences with $\bar{G}_a(\cdot)$ in place of $G_a^{(t)}(x_0,\cdot)$.

The proof of Lemma \ref{lem_reach_infty} is therefore a direct application of points \textit{i)} and \textit{ii)} of \cite[Theorem 4.1]{companion}.
\end{proof}

The proof strategies are similar for the three main results of this paper. We first derive a property on the auxiliary process  and then show how we can infer the long time behaviour of the infection at the cell population scale.

\begin{proof}[Proof of Proposition \ref{cor_esperance}]

Let us begin with the proof of point $(1)$.

Recall the definition of $\rho$ in \eqref{def_p}.
We first make the simplifying assumption that 
there exists $x_0$ such that 
$$ g(x)+ \E[\mathcal{I}]\lambda (x) = \tilde{g}x , \quad x \geq x_0.$$
In particular, for $x \geq x_0$
$$ \rho(x)= (\tilde{g}-b)x + \E[\mathcal{P}]r(x) , $$
which goes to $-\infty$ when $x$ goes to infinity under the assumptions of point (1) ($\tilde{g}<b$).
This implies the existence of $\bar{y}\geq x_0$ and $\eps>0$ such that for any $x \geq \bar{y}$,
\begin{equation}\label{def_y}
\rho(x) \leq -M-(b-\tilde{g})\eps,
\end{equation}
where
$$ M:= \sup_{x \leq x_0}\left\{g(x)+ \E[\mathcal{I}]\lambda(x)\right\}. $$
We choose such a real number $\bar{y}$ in the definition of $\bar{Y}$ in \eqref{EDS_barY}.

We will first derive the following property for the auxiliary process $Y$:
\begin{equation*} \E[Y_t]\leq \bar{y}, \quad \text{for all}\quad t \geq 0 \end{equation*}
by proving successively that
\begin{enumerate}
\item for every $t\geq 0$, $\E[\bar{Y}_t]\leq \bar{y}$,
\item for every $t\geq 0$, $Y_t \leq \bar{Y}_t$ a.s.
\end{enumerate}

Recall \eqref{EDS_barY}. Applying Itô formula with jumps (see for instance \cite{ikeda1989} Th 5.1 p.66), we get
\begin{align*} 
e^{(b-\tilde{g})t}\bar{Y}_t =& 
\int_0^te^{(b-\tilde{g})s}\sqrt{2\sigma^2 (\bar{Y}_s)}dB_s +
\int_0^t\int_0^{p(\bar{Y}_{s-})}\int_{\mathbb{R}_+}e^{(b-\tilde{g})s}z\widetilde{Q}(ds,dx,dz)\\& \nonumber
+\int_0^t\int_0^{2b}\int_0^1e^{(b-\tilde{g})s}(\theta-1)\bar{Y}_{s-}\tilde{N}(ds,dx,d\theta) \\& \nonumber+ \int_0^t e^{(b-\tilde{g})s}(g(\bar{Y}_s)+ \E[\mathcal{I}]\lambda(\bar{Y}_s)-\tilde{g}\bar{Y}_s)\mathbf{1}_{\{\bar{Y}_s \leq x_0\}}ds \\&
+\int_0^t\int_0^{r(\bar{y})}\int_{\mathbb{R}_+}e^{(b-\tilde{g})s}\mathfrak{p}N_1(ds,dx,d\mathfrak{p})
+\int_0^t\int_0^{\lambda(\bar{Y}_{s-})}\int_{\mathbb{R}_+}e^{(b-\tilde{g})s}\mathfrak{i}\tilde{N}_2(ds,dx,d\mathfrak{i})\\
=&M^{(loc)}_t+\int_0^t\int_0^{r(\bar{y})}\int_{\mathbb{R}_+}e^{(b-\tilde{g})s}\mathfrak{p}N_1(ds,dx,d\mathfrak{p})
\\& \nonumber+ \int_0^t e^{(b-\tilde{g})s}(g(\bar{Y}_s)+ \E[\mathcal{I}]\lambda(\bar{Y}_s)-\tilde{g}\bar{Y}_s)\mathbf{1}_{\{\bar{Y}_s \leq x_0\}}ds ,
\end{align*}
where $(M^{(loc)}_{t\wedge \tau^+(x)}, t \geq 0)$ is a martingale for any $x>0$.
Indeed this is a local martingale and we can check using \cite[Theorem 51 p.38]{protter2005stochastic} as in the proof of Lemma 5.1 in \cite{li2017general} that it is a martingale.
Taking the expectation yields
\begin{equation} \label{comput_esp_barY} \E [e^{(b-\tilde{g})(t\wedge \tau^+(x))}\bar{Y}_{t\wedge \bar{\tau}^+(x)} ]\leq \left(r(\bar{y})\E[\mathcal{P}]+ M\right)\E \left[ \frac{e^{(b-\tilde{g})(t\wedge \bar{\tau}^+(x))}}{b-\tilde{g}} \right]. \end{equation}
As $g(x)+ \E[\mathcal{I}]\lambda(x)=\tilde{g}x$ for large $x$, condition \textbf{(SN$\infty$)} is satisfied and thus according to Lemma \ref{lem_reach_infty}, 
$$\lim_{x \to \infty} \bar{\tau}^+(x)=\infty. $$
As a consequence, letting $x$ go to infinity, we obtain by Fatou's Lemma:
\begin{align*} e^{(b-\tilde{g})t}\E [\bar{Y}_{t} ]=& \E [\liminf_{x\to \infty}e^{(b-\tilde{g})(t\wedge \tau^+(x))}\bar{Y}_{t\wedge \bar{\tau}^+(x)} ]\\
\leq & \liminf_{x\to \infty}\E [e^{(b-\tilde{g})(t\wedge \tau^+(x))}\bar{Y}_{t\wedge \bar{\tau}^+(x)} ]\\
\leq& 
 \left(r(\bar{y})\E[\mathcal{P}]+M\right)\left( \frac{e^{(b-\tilde{g})t}-1}{b-\tilde{g}} \right), \end{align*}
and thus
\begin{equation*} \E [\bar{Y}_{t} ]\leq \frac{r(\bar{y})\E[\mathcal{P}]+ M}{b-\tilde{g}}\leq \bar{y}-\eps. \end{equation*}
 This concludes the proof of step (1).\\

Let us now move on to step (2). By unicity of the solutions to \eqref{EDS_Y} and \eqref{EDS_barY} and as the rate of positive jumps $p$ is non decreasing with the process value, we have that for $t\geq 0$:
$$ Y_t \leq \bar{Y}_t \quad \text{a.s. if} \quad \E[Y_s]\leq \bar{y} \quad \forall s \leq t.  $$
We will show that this last inequality is satisfied for every $s\geq 0$.\\

Notice first that with computations similar to the ones to get \eqref{comput_esp_barY}, we obtain:
\begin{align} \label{comput_esp_Y} \E [e^{(b -\tilde{g})(t\wedge \tau^+(x))}Y_{t\wedge \tau^+(x)} ]= & 
\E[\mathcal{P}] \int_0^t \E \left[\mathbf{1}_{s\leq \tau^+(x)}e^{(b -\tilde{g})s}r(\E[Y_{s}]) \right]ds \\
&+ \int_0^t \E \left[\mathbf{1}_{s\leq \tau^+(x)}e^{(b -\tilde{g})s}(g(Y_{s})+ \E[\mathcal{I}]\lambda(Y_s)-\tilde{g}Y_{s}) \mathbf{1}_{Y_s \leq x_0} \right]ds \nonumber. \end{align}
Hence for a given $x$, $\E [e^{(b-\tilde{g})(t\wedge \tau^+(x))}Y_{t\wedge \tau^+(x)} ]$ evolves continuously with $t$. This justifies the existence of $\mu(x)$ and $\upsilon(x)$ defined as follows: assume that there exist $(x,t) \in \R_+^2$ such that
$$ \E \left[e^{(b-\tilde{g})(t\wedge \tau^+(x))}\bar{Y}_{t\wedge \bar{\tau}^+(x)} \right]e^{-(b-\tilde{g})t}= \bar{y}+\eps, $$
and choose the smaller $t$ realising this equality:
$$ \mu(x):= \inf \left\{ t \geq 0, \E \left[e^{(b-\tilde{g})(t\wedge \tau^+(x))}\bar{Y}_{t\wedge \bar{\tau}^+(x)} \right]e^{-(b-\tilde{g})t}= \bar{y}+\eps \right\} $$
(if $\mu(x)= \infty$, we may jump directly to Equation \eqref{maj_ybareps}).
Now, define $\upsilon(x)\geq \mu(x)$ as follows:
$$ \upsilon(x):= \inf \left\{ t \geq \mu(x), \E \left[e^{(b-\tilde{g})(t\wedge \tau^+(x))}\bar{Y}_{t\wedge \bar{\tau}^+(x)} \right]e^{-(b-\tilde{g})t} \notin [\bar{y},\bar{y}+2\eps] \right\}, $$
where $\eps >0$.

Notice that \eqref{comput_esp_Y} may be rewritten, for $\mu(x) \leq t \leq \upsilon(x)$,
\begin{multline*}  \E [e^{(b-\tilde{g})(t\wedge \tau^+(x)}Y_{t\wedge \tau^+(x)} ]-\E [e^{(b-\tilde{g})(\mu(x)\wedge \tau^+(x))}Y_{\sigma\wedge \tau^+(x)} ] = \\ \int_{\mu(x)}^t\E \Big[ \mathbf{1}_{s\leq \tau^+(x)} e^{(b-\tilde{g})s} \Big(
\E[\mathcal{P}] r(\E[e^{(b-\tilde{g})(s\wedge \tau^+(x))}Y_{s\wedge \tau^+(x)}]e^{-(b-\tilde{g})s}) \\+(g(Y_{s})+\E[\mathcal{I}]\lambda(Y_s)-\tilde{g}Y_{s}) \mathbf{1}_{Y_s \leq x_0}\Big) \Big]ds\\
=\int_{\mu(x)}^t\E \Big[ \mathbf{1}_{s\leq \tau^+(x)} e^{(b-\tilde{g})(t\wedge \tau^+(x))s} \Big(
 \rho(\E[e^{(b-\tilde{g})(s\wedge \tau^+(x))}Y_{s\wedge \tau^+(x)}]e^{-(b-\tilde{g})s}) \\+(b-\tilde{g})\E[e^{(b-\tilde{g})(s\wedge \tau^+(x))}Y_{s\wedge \tau^+(x)}]e^{-(b-\tilde{g})s} +(g(Y_{s})+\E[\mathcal{I}]\lambda(Y_s)-\tilde{g}Y_{s}) \mathbf{1}_{Y_s \leq x_0}\Big) \Big]ds.\end{multline*}
Using \eqref{def_y} and the fact that $\bar{y}\geq x_0$, we obtain:
\begin{multline*}  \E [e^{(b-\tilde{g})(t\wedge \tau^+(x))}Y_{t\wedge \tau^+(x)} ] \leq \E [e^{(b-\tilde{g})(\mu(x)\wedge \tau^+(x))}Y_{\sigma\wedge \tau^+(x)} ] \\
+
(b-\tilde{g})(t\wedge \tau^+(x))\int_{\mu(x)}^t \E \left[\mathbf{1}_{s\leq \tau^+(x)}\E[e^{(b-\tilde{g})(s\wedge \tau^+(x))}Y_{s\wedge \tau^+(x)}] \right]ds  .
 \end{multline*}
Applying Grömwall's Lemma then yields
$$\E [e^{(b-\tilde{g})(t\wedge \tau^+(x))}Y_{t\wedge \tau^+(x)} ] \leq  
\E [e^{(b-\tilde{g})(\mu(x)\wedge \tau^+(x))}Y_{\mu(x)\wedge \tau^+(x)} ]e^{(b-\tilde{g})(t-\mu(x))}, $$
or equivalently
$$\E [e^{(b-\tilde{g})(t\wedge \tau^+(x))}Y_{t\wedge \tau^+(x)} ]e^{-(b-\tilde{g})t} \leq  
\E [e^{(b-\tilde{g})(\mu(x)\wedge \tau^+(x))}Y_{\mu(x)\wedge \tau^+(x)} ]e^{-(b-\tilde{g})\mu(x)}=\bar{y}+\eps.$$
This implies that for any $(t,x) \in \R_+^2$, 
\begin{equation} \label{maj_ybareps}\E [e^{(b-\tilde{g})(t\wedge \tau^+(x))}Y_{t\wedge \tau^+(x)} ]e^{-(b-\tilde{g})t} \leq \bar{y}+\eps,\end{equation}
and thus for any $t\geq 0$, by Fatou's Lemma, as Condition \ref{SNinfty} is satisfied
\begin{multline*}\E[Y_t]= \E[e^{(b-\tilde{g})t}Y_t]e^{-(b-g)t}=\E[\liminf_{x\to \infty}e^{(b-\tilde{g})(t\wedge \tau^+(x))}Y_{t\wedge \bar{\tau}^+(x)}]e^{-(b-g)t} \\
\leq \liminf_{x\to \infty}\E[e^{(b-\tilde{g})(t\wedge \tau^+(x))}Y_{t\wedge \bar{\tau}^+(x)}]e^{-(b-\tilde{g})t} \leq \bar{y}+\eps. \end{multline*}
This concludes step $(2)$ in the case $g(x)+\E[\mathcal{I}]\lambda(x) =\tilde{g}x$ for $x \geq x_0$.
Now, as $p$ and $r$ are non decreasing, if we choose the same Poisson point processes $N$, $Q$, $N_1$ and $N_2$, and Brownian motion $B$ we may couple two solutions $Y^{(1)}$ and $Y^{(2)}$ of the SDE \eqref{EDS_Y} with respective growth rate functions for the parasites $g_1$ and $g_2$ satisfying 
$$ g_1(x) \leq g_2(x) \quad \text{for all} \ x \geq 0 $$
and respective reservoir infection rates 
$\lambda_1$ and $\lambda_2$ satisfying 
$$ \lambda_1(x) \leq \lambda_2(x) \quad \text{for all} \ x \geq 0 $$
such that
$$ Y^{(1)}_t \leq Y^{(2)}_t \ a.s. \ \forall t \geq 0 $$
as soon as $Y^{(1)}_0=Y^{(2)}_0$.
The choices $g_1(x)=g(x)$, $\lambda_1(x)=\lambda(x)$ and $g_2$ and $\lambda_2$ continuous satisfying
$$\left\{ \begin{array}{llll}
g_2(x)+ \E[\mathcal{I}]\lambda_2(x)&\geq&g(x)+ \E[\mathcal{I}]\lambda(x) & \text{for $x \leq x_0$}\\
&=&\tilde{g}x & \text{for $x \geq x_0$},
\end{array}\right.$$
allow us to extend the proof of step $(2)$ to the assumptions of point $(1)$ of Proposition \ref{cor_esperance}.
\\

We may now prove point $(1)$ of Proposition \ref{cor_esperance}.
Recall that $\E[Y_t]\leq \bar{y}$ for every $t \geq 0$. In particular, from Markov's inequality we have for every $K>0$,
\begin{equation*}
 \P(Y_t \geq K) \leq \frac{\E[Y_t]}{K}\leq \frac{\bar{y}}{K}. \end{equation*}
 And from the Many-to-One formula \eqref{MtO} applied to the bounded function $x \mapsto \mathbf{1}_{\{x \geq K\}}$, we deduce
 \begin{equation} \label{MtO_for_espfinie} e^{-(b-d)t}\E \left[ \mathbf{1}_{\{N_t \geq 1\}}\sum_{u \in V_t} \mathbf{1}_{\{X_t^u \geq K\}} \right] \leq \frac{\bar{y}}{K}. \end{equation}
According to \cite{durrett2015branching}, if we denote by $\alpha_t$ and $\beta_t$ the following variables,
$$ \alpha_t:= \frac{de^{(b-d)t}-d}{be^{(b-d)t}-d}\quad \text{and} \quad 
\beta_t:= \frac{be^{(b-d)t}-b}{be^{(b-d)t}-d}, $$
we have for $t \geq 0$,
$$ \P(N_t=0)= \alpha_t \quad \text{and for } n \geq 1, \quad \P(N_t=n)= (1-\alpha_t)(1-\beta_t)\beta_t^{n-1}.  $$
We thus obtain, for $t \geq 0$ and $K \in \N$,
\begin{align*}
\P \left( N_t e^{-(b-d)t}\leq \frac{1}{\sqrt{K}}, N_t \geq 1 \right) =
& \sum_{i=1}^{\lfloor e^{(b-d)t}/\sqrt{K} \rfloor} \P(N_t=i)\\
=& (1-\alpha_t)\left(1- \beta_t^{\lfloor e^{(b-d)t}/\sqrt{K} \rfloor} \right) \\
=& \left( 1-\frac{d}{b}+ O(e^{-(b-d)t}) \right) \left( \frac{b-d}{b \sqrt{K}}+ O\left(   \frac{e^{-(b-d)t}}{\sqrt{K}} \right) \right)\\
&= \frac{1}{\sqrt{K}} \left(\frac{b-d}{b}\right)^2+ O\left(   \frac{e^{-(b-d)t}}{\sqrt{K}}\right).
\end{align*}
We thus get, using \eqref{MtO_for_espfinie}, 
\begin{align*}
\E \left[ \mathbf{1}_{\{N_t \geq 1\}}\frac{\sum_{u \in V_t} \mathbf{1}_{\{X_t^u \geq K\}}}{N_t} \right] = & \E \left[\frac{\sum_{u \in V_t} \mathbf{1}_{\{X_t^u \geq K\}}}{e^{(b-d)t}}\frac{ \mathbf{1}_{\{N_t \geq 1\}}}{N_te^{-(b-d)t}} \right] \\
\leq & \P \left( N_t e^{-(b-d)t}\leq \frac{1}{\sqrt{K}}, N_t \geq 1 \right)+\frac{\bar{y}}{K} \sqrt{K}\\
=& \frac{1}{\sqrt{K}} \left(\left(\frac{b-d}{b}\right)^2+\bar{y}\right)+ O\left(   \frac{e^{-(b-d)t}}{\sqrt{K}}\right), 
\end{align*}
which ends the proof of point $(1)$.\\

Let us now prove point $(2)$. 
Again we make simplifying assumptions and will obtain the general case by coupling. We thus assume that $g(x)+\E[\mathcal{I}]\lambda(x)=\tilde{g} x$ with $\tilde{g}>b$.
In this case, from similar computations to the ones to get \eqref{comput_esp_barY}, we derive the following series of inequalities for $0 \leq y \leq x$,
\begin{align} \E_y [e^{(b-\tilde{g})(t\wedge \tau^+(x))}Y_{t\wedge \tau^+(x)} ]=y+  
\E[\mathcal{P}] \int_0^t \E_y \left[\mathbf{1}_{s\leq \tau^+(x)}e^{(b-\tilde{g})s}r(\E_y[Y_{s}]) \right]ds\geq y.  \label{min_liminf} \end{align}

As $g(x)+\E[\mathcal{I}]\lambda(x)=\tilde{g}x$, condition \textbf{(SN$\infty$)} is satisfied and thus according to Lemma \ref{lem_reach_infty}, 
$$ \limsup_{x \to \infty} \tau^+(x)=\infty, \quad \P_y-a.s. $$

We will now prove that \eqref{min_liminf} implies that for all $x \geq 0$,
\begin{equation} \label{not_equal_1}  \P_y (\tau^+(x)=\infty)<1. \end{equation}

To this aim we will make a reductio ad absurdum by assuming that there exists $x_0 \geq 0$ such that
\begin{equation*}  \P_y (\tau^+(x_0)=\infty)=1. \end{equation*}

Notice that $x_0$ may be chosen larger than $y$ without loss of generality. We thus have, for any $t \geq 0$,
$$ Y_{t\wedge \tau^+(x_0)}\leq x_0 \quad \text{and} \quad 
e^{(b-\tilde{g})(t\wedge \tau^+(x_0))}=e^{(b-\tilde{g})}t, \quad a.s.  $$
Hence, taking $x=x_0$ in \eqref{min_liminf} we obtain that for any $t\geq 0$,
$$ x_0 e^{(b-\tilde{g})}t \geq y. $$
But by assumption, the left hand term goes to $0$ when $t$ goes to infinity, which leads to a contradiction.
We thus conclude that \eqref{not_equal_1} holds.

As a consequence, for every $x\geq 0$, there exist $\alpha_x>0$ and $t_x<\infty$ such that
$$ \P(\tau^+(x)\leq t_x)\geq  \alpha_x. $$

Notice that as the functions $p$ and $r$ are non-decreasing, the solution of \eqref{EDS_Y} is non decreasing with the initial condition. In particular, for any $y\geq 0$, 
$$ \P(\tau^+(x)> t_x)= \P_0(\tau^+(x)> t_x) \geq \P_y(\tau^+(x)> t_x). $$
Thus for any $n \in \N^*$,
\begin{align*}
\P(\tau^+(x)> nt_x)&= \P\left(\tau^+(x)> (n-1)t_x\right)\P\left(\tau^+(x)> nt_x|\tau^+(x)> (n-1)t_x\right)\\
& \leq \P\left(\tau^+(x)> (n-1)t_x\right)\sup_{y \geq 0}\P_y\left(\tau^+(x)> t_x\right)\\
& = \P\left(\tau^+(x)> (n-1)t_x\right)\P_0\left(\tau^+(x)> t_x\right)\\
& \leq \P\left(\tau^+(x)> (n-1)t_x\right)(1-\alpha_x).
\end{align*}
By recurrence, 
$$ \P(\tau^+(x)> nt_x)\leq \left( 1-\alpha_x\right)^n, $$
and taking the limit when $n$ goes to infinity, we obtain that
$$ \P(\tau^+(x)<\infty)=1-\P(\tau^+(x)=\infty)=1 . $$
In particular, for any $K,\eps>0$, there exists $t_K<\infty$ such that for every $t \geq t_K$
\begin{equation}\label{quantif_sup}
 \P \left( \sup_{s \leq t} Y_s \leq K \right)\leq \P \left( \sup_{s \leq t_K} Y_s \leq K \right) =  \P \left( \tau^+(K)\geq t_K \right)\leq \eps.
\end{equation}
Now from the Many-to-One formula \eqref{MtO}, we know that for $t,K \geq 0$,
$$ \E \left[\mathbf{1}_{\{N_t \geq 1\}} \frac{\sum_{u \in V_t}\mathbf{1}_{\{\sup_{s\leq t}X_s^{(u)}\leq K\}}}{e^{(b-d)t}} \right] = \P \left(\sup_{s\leq t}Y_s\leq K\right). $$
Hence from \eqref{quantif_sup},
$$ \frac{\sum_{u \in V_t} \mathbf{1}_{\{\sup_{s\leq t}X_s^{(u)}\leq K\}} }{e^{(b-d) t}}\xrightarrow[t\rightarrow \infty]{} 0 \quad \text{in probability}. $$

Moreover, the fact that $(N_t,t\geq 0)$ is a birth and death process with individual death rate $d$ and individual birth 
rate $b$ also entails that $N_te^{-(b-d) t}$ converges in probability to an exponential random 
variable with parameter $b-d$ on the event of survival, when $t$ goes to infinity (see \cite{durrett2015branching} for instance). Hence, we have
\begin{align*}
\mathbf{1}_{\{N_t \geq 1\}}\frac{\sum_{u \in V_t} \mathbf{1}_{\{\sup_{s\leq t}X_s^{(u)}\leq K\}} }{N_t} = \frac{\sum_{u \in V_t} \mathbf{1}_{\{\sup_{s\leq t}X_s^{(u)}\leq K\}} }{e^{(b-d) t}}\times \frac{\mathbf{1}_{\{N_t \geq 1\}}}{N_te^{-(b-d) t}}\xrightarrow[t\rightarrow \infty]{} 0 \quad \text{in probability}.
\end{align*}
To end the proof of point $(2)$ we use again the fact that we may couple a solution $Y^{(1)}$ of \eqref{EDS_Y} with $g_1(x)+ \E[\mathcal{I}]\lambda_1(x) =\tilde{g}x$ with a solution $Y^{(2)}$ with $g_2(x)+ \E[\mathcal{I}]\lambda_2(x) \geq\tilde{g}x$ with the same initial condition in such a way that 
$$ Y^{(1)}_t \leq Y^{(2)}_t \quad \forall t\geq 0 \quad a.s. $$
\vspace{.2cm}

Let us end with the proof of point $(3)$. To this aim, we introduce the process $\tilde{Y}$ defined as the unique strong solution to the following SDE
\begin{align}  \label{EDS_tildeY}
\tilde{Y}_t =&  \int_0^t g(\tilde{Y}_s)ds
+\int_0^t\sqrt{2\sigma^2 (\tilde{Y}_s)}dB_s +
\int_0^t\int_0^{p(\tilde{Y}_{s-})}\int_{\mathbb{R}_+}z\widetilde{Q}(ds,dx,dz)\\& \nonumber+\int_0^t\int_0^{2b}\int_{\mathbb{R}_+}(\theta-1)\tilde{Y}_{s-}N(ds,dx,d\theta)
,
\end{align}
where the Brownian $B$ and the Poisson random measures $N$ and $Q$ are the same as in the SDE \eqref{EDS_Y} solved by the process $(Y_t, t \geq 0)$.
Following the same proof as the one of Lemma \ref{lem_reach_infty} (it is enough to take $r\equiv 0$), we may check that Lemma \ref{lem_reach_infty} also holds for the process $\tilde{Y}$.
Moreover, a close look at the proof of \cite[Theorem 2.8]{li2017general} shows that the probability for $\tilde{Y}$ to reach infinity in a given time is uniformly lower bounded for an initial condition large enough in the following sense:
\begin{equation} \label{lowerbound_explo}
\exists \ \alpha >0, \ x_1,T_1 <\infty, \quad \inf_{x \geq x_1} \P_x(\tilde{\tau}^+(\infty)<T_1)\geq \alpha,
\end{equation}
where $\tilde{\tau}^+(\infty)$ is defined as $\tau^+(\infty)$ in \eqref{def_tau_infty} with the process $\tilde{Y}$ instead of $Y$.
To be more precise, it is due to the fact that if we introduce the function 
$$t(x):= \left(\ln (\ln(x^{(1-\delta)}))\right)^{-1-\eta}, $$
where $\delta \leq (3-2a)^{-1}$, and the stopping times
$$ \tilde{\tau}_0=0 \quad \text{and} \quad \tilde{\tau}_{n+1}:= \tilde{\tau}^+ \left( \left( \tilde{Y}_{\tilde{\tau}_n} \vee 1 \right)^{1+\delta} \right)+\tilde{\tau}_n \quad \text{for} \quad n \in \N, $$
then under $\P_{\eps^{-1}}$, if $\tilde{\tau}_n<\infty$ for all $n\geq 1$,
$$ \sum_{n=1}^\infty t(\tilde{Y}_{\tilde{\tau}_n})\leq \left( n \ln (1+\delta) \right)^{-(1+\delta)}=:\mathfrak{B}<\infty, $$
and for small enough $\eps$, 
$$ \P_{\eps^{-1}} \left(\tilde{\tau}^+(\infty)<\mathfrak{B}\right)\geq \prod_{n=1}^\infty \left(1- \eps^{(a-1)\delta(1+\delta)^n}\right)>0. $$
As the last quantity is decreasing with $\eps$, we get \eqref{lowerbound_explo}.\\

Let us now introduce the process $\tilde{\tilde{Y}}$ defined as the unique strong solution to the following SDE
\begin{align} \nonumber \label{EDS_tildetildeY}
\tilde{\tilde{Y}}_t =& \int_0^t g(\tilde{\tilde{Y}}_s)ds
+\int_0^t\sqrt{2\sigma^2 (\tilde{\tilde{Y}}_s)}dB_s +
\int_0^t\int_0^{p(\tilde{\tilde{Y}}_{s-})}\int_{\mathbb{R}_+}z\widetilde{Q}(ds,dx,dz)\\& +\int_0^t\int_0^{2b}\int_0^1(\theta-1)\tilde{\tilde{Y}}_{s-}N(ds,dx,d\theta) + \int_0^t\int_0^{\underline{\lambda}}\int_{\mathbb{R}_+}\mathfrak{i}N_2(ds,dx,d\mathfrak{i})
,
\end{align}
where $\underline{\lambda}:= \min_{x \leq 3x_1}\lambda(x)$,  the Brownian $B$ and the Poisson random measures $N$, $Q$ and $N_2$ are the same as in the SDE \eqref{EDS_Y} solved by the process $(Y_t, t \geq 0)$.
Now denote: 
$$A(t):=\int_0^t\sqrt{2\sigma^2 (\tilde{\tilde{Y}}_s)}dB_s+\int_0^t\int_0^{p(\tilde{\tilde{Y}}_{s-})}\int_{\mathbb{R}_+}z\widetilde{Q}(ds,dx,dz)
+\int_0^t\int_0^{2b}\int_{0}^1(\theta-1)\tilde{\tilde{Y}}_{s-}N(ds,dx,d\theta)$$
and
$$ B(t):=\int_0^t\int_0^{\underline{\lambda}}\int_{\mathbb{R}_+}\mathfrak{i}N_2(ds,dx,d\mathfrak{i}).$$

If we take $\mu>0$, introduce $\tilde{\tilde{\tau}}^+$ as $\tau^+$ with $\tilde{\tilde{Y}}$ in place of $Y$ (recall Equation \eqref{def_tau}), and make similar computations to the ones to prove \cite[Lemma 5.1]{li2017general}, followed by the Markov inequality, we get
$$ \P_0\left( \sup_{t \leq \mu} |A(t \wedge \tilde{\tilde{\tau}}^+(3x_1))|>\frac{x_1}{2} \right)\leq C_1 \mu \left( \sup_{x \leq 3x_1}\sigma(x)+\sup_{x \leq 3x_1}p(x)+1 \right), $$
where $C_1$ is a finite constant.
Moreover, as $B(t)$ is a compound Poisson process, with rate $ 0< \underline{\lambda}:= \min_{x \leq 3x_1}\lambda(x)<\infty$,
there exists a positive constant $C_2$ depending on $\mu$ such that 
$$ \P_0 \left( \frac{3}{2}x_1 \leq B(\mu)\leq 2x_1 \right)\geq C_2(\mu). $$
If such a constant does not exist for a given $x_1$ because the probability $\mathfrak{I}$  has his support on $(3/2 x_1,\infty)$ for instance, we can choose a larger $x_1$ and all the properties derived until now still hold true.

As a consequence, for $x_1$ large enough, we have the following series of inequalities:
\begin{multline*}
\P_0(\tilde{\tilde{\tau}}^+(x_1)\leq \mu) 
\geq \P_0 \left( \frac{3}{2}x_1 \leq B(\mu\wedge \tilde{\tilde{\tau}}^+(3x_1))\leq 2x_1 , \sup_{t \leq \mu} |A(t \wedge \tilde{\tilde{\tau}}^+(3x_1))|\leq \frac{x_1}{2}\right) \\
=\P_0 \left( \frac{3}{2}x_1 \leq B(\mu)\leq 2x_1 , \sup_{t \leq \mu} |A(t \wedge \tilde{\tilde{\tau}}^+(3x_1))|\leq \frac{x_1}{2}\right) \\
=\P_0 \left( \frac{3}{2}x_1 \leq B(\mu)\leq 2x_1 \Big| \sup_{t \leq \mu} |A(t \wedge \tilde{\tilde{\tau}}^+(3x_1))|\leq \frac{x_1}{2}\right)\P_0 \left( \sup_{t \leq \mu} |A(t \wedge \tilde{\tau}^+(3x_1))|\leq \frac{x_1}{2}\right)  \\
=\P_0 \left( \frac{3}{2}x_1 \leq B(\mu)\leq 2x_1 \right)\P_0 \left( \sup_{t \leq \mu} |A(t \wedge \tilde{\tilde{\tau}}^+(3x_1))|\leq \frac{x_1}{2}\right)  \\
\geq C_2(\mu) \left(1- C_1 \mu \left( \sup_{x \leq 3x_1}\sigma(x)+\sup_{x \leq 3x_1}p(x)+1 \right)\right).
\end{multline*}
The first equality stems from the fact that on the event
$$ \left\{ \frac{3}{2}x_1 \leq B(\mu\wedge \tilde{\tilde{\tau}}^+(3x_1))\leq 2x_1 \right\} \bigcap  \left\{ \sup_{t \leq \mu} |A(t \wedge \tilde{\tilde{\tau}}^+(3x_1))|\leq \frac{x_1}{2} \right\}, $$
$\mu$ is smaller than $\tilde{\tilde{\tau}}^+(3x_1)$ and the last equality is a consequence of the independence of 
$N$, $Q$, $N_2$ and $B$.
We thus conclude that there exist $\mu_0<\infty$ and $C(\mu_0)>0$ such that
$$ \P_0(\tilde{\tilde{\tau}}^+(x_1)\leq \mu_0)> C(\mu_0). $$

Now, let us introduce the process $Y^{(+,x_0)}$ as follows:
\begin{itemize}
\item $Y^{(+,x)}_0= x_0$
\item $Y^{(+,x)}_.$ follows the SDE \eqref{EDS_tildetildeY} on $[0,\tilde{\tilde{\tau}}^+(3x_1)]$
\item $Y^{(+,x)}_.$ follows the SDE \eqref{EDS_tildeY} on $[\tilde{\tilde{\tau}}^+(3x_1),\infty)$.
\end{itemize}
As $\tilde{Y}$ and $\tilde{\tilde{Y}}$ are non-decreasing with respect to their initial condition, by taking the same Poisson point processes $N$, $N_1$, $N_2$ and $Q$ and the same Brownian motion $B$, when $Y_0=x_0$, we can couple the processes $Y$ and $Y^{(+,x_0)}$ to get
\begin{equation*} Y_t \geq Y^{(+,x_0)}_t \quad a.s. \quad \text{for any} \quad t \geq 0. \end{equation*}

Adding \eqref{lowerbound_explo} we deduce that
$$\inf_{x_0 \geq 0}\P_{x_0}(Y_t= \infty \text{ for } t \geq \mu_0+T_1)\geq\inf_{x_0 \geq 0}\P(Y^{(+,x_0)}_t= \infty \text{ for } t \geq \mu_0+T_1)\geq C(\mu_0) \alpha.$$
Applying a renewal argument as in the proof of point $(2)$, we get that for any $n\geq 1$,
$$\sup_{x_0 \geq 0}\P_{x_0}(\tau^+(\infty) \geq n(\mu_0+T_1))\leq(1- C(\mu_0) \alpha)^n,$$
and thus taking the limit when $n$ goes to infinity, we finally obtain
$$\inf_{x_0 \geq 0}\P_{x_0}(\tau^+(\infty) <\infty)=1.$$
The end of the proof of point $(3)$ to get a convergence in probability, via the Many-to-One formula, is the same as the end of the proof of point $(2)$, and thus leads to
$$ \lim_{t \to \infty} \mathbf{1}_{\{N_t \geq 1\}} \frac{\sum_{u \in V_t}\mathbf{1}_{\{\sup_{s\leq t}X_s^{(u)}<\infty\}}}{N_t} =0, \quad \textit{in probability}. $$
We now need to prove that the convergence is even almost sure. It is done exactly as in the proof of Proposition 2.6. iii) in \cite{marguet2020long} (this proof being a slight generalisation of the proof of Theorem 4.2. i) in \cite{BT11}), and we refer the reader to these two papers.
\end{proof}

\begin{proof}[Proof of Proposition \ref{prop_tendvers0}]
Let us begin with point (1). First notice that \eqref{rho_neg} implies that for any $x \geq 0$,
$$ g(x) \vee \E[\mathcal{P}]r(x)  \leq b x. $$
Now from Itô formula with jumps as well as from \eqref{rho_neg}, we have the following series of inequalities, when $Y_0=x_0$:
\begin{align}\label{ineg}
\E_{x_0}[e^{b(t \wedge \tau^+(x))}Y_{t \wedge \tau^+(x)}]=&x_0+ \int_0^t e^{bs} \E_{x_0}[\mathbf{1}_{s \leq\tau^+(x)}g(Y_s)]ds + \E[\mathcal{P}]\int_0^t e^{bs} \E_{x_0}[\mathbf{1}_{s \leq\tau^+(x)}r(\E_{x_0}[Y_s])]ds \nonumber \\
\leq & x_0+b\int_0^t e^{bs} \E_{x_0}[\mathbf{1}_{s \leq\tau^+(x)}Y_s]ds +b\int_0^t e^{bs} \P_{x_0}(s \leq\tau^+(x))\E_{x_0}[Y_s]ds \nonumber\\
\leq & x_0+2b\int_0^t e^{bs} \E_{x_0}[Y_s]ds. 
\end{align}
From \eqref{rho_neg}, we have that Condition \ref{SNinfty} is satisfied, and thus $\lim_{x \to \infty} \tau^+(x)=\infty$ a.s. Hence from Fatou's Lemma we get that
\begin{align*}
e^{bt}\E_{x_0}[Y_{t}]\leq x_0+2b\int_0^t e^{bs} \E_{x_0}[Y_s]ds,
\end{align*}
and we conclude by Grömwall's Lemma that 
$$ e^{bt}\E_{x_0}[Y_{t}]\leq x_0 e^{2bt}<\infty. $$
Applying inequality \eqref{rho_neg} in the first line of \eqref{ineg} as well as Fatou's Lemma we obtain
\begin{align*}
e^{bt}\E_{x_0}[Y_{t}]\leq x_0+ \liminf_{x \to \infty} \left(b\int_0^t e^{bs} \P_{x_0}(s \leq\tau^+(x))\E_{x_0}[Y_s]ds - \int_0^t \P_{x_0}(s \leq\tau^+(x))(\alpha\E_{x_0}[Y_s]\wedge \eta)e^{bs}ds \right. \\
\left. + \int_0^t e^{bs} \E_{x_0}[\mathbf{1}_{s \leq\tau^+(x)}(g(Y_s)-g(\E_{x_0}[Y_s]))]ds \right).
\end{align*}
As for any $s\geq 0$, $\mathbf{1}_{s \leq\tau^+(x)}$ is smaller than one and non-decreasing with $x$, $g(Y_s)\leq b Y_s$ and $\E_{x_0}[Y_s]<\infty$ we get by the Monotone Convergence Theorem,
\begin{align*}
e^{bt}\E_{x_0}[Y_{t}]\leq x_0+ b\int_0^t e^{bs} \E_{x_0}[Y_s]ds - \int_0^t (\alpha\E_{x_0}[Y_s]\wedge \eta)e^{bs}ds
 + \int_0^t e^{bs} \E_{x_0}[g(Y_s)-g(\E_{x_0}[Y_s])]ds.
\end{align*}
As $g$ is concave, Jensen inequality entails
\begin{equation} \label{maj_Ey} e^{bt}\E_{x_0}[Y_{t}]\leq x_0+ b\int_0^t e^{bs} \E_{x_0}[Y_s]ds - \int_0^t (\alpha\E_{x_0}[Y_s]\wedge \eta)e^{bs}ds. \end{equation}
Let us introduce:
$$ \mathcal{T}_1 := \inf \left\{ t \geq 0, \E[Y_t]< \frac{\eta}{\alpha} \right\} $$
and
$$ \mathcal{T}_2 := \inf \left\{ t \geq \mathcal{T}_1, \E[Y_t]\geq \frac{\eta}{\alpha} \right\} $$
(if $\mathcal{T}_1=0$ we can choose $\eta<\alpha x_0$, which will make $\mathcal{T}_1>0$ and not modify the later computations).
We will prove that $\mathcal{T}_1<\infty = \mathcal{T}_2$.
To this aim, let us do a reductio ad absurdum by assuming that $\mathcal{T}_1=\infty$. In this case, \eqref{maj_Ey} writes for any $t\geq 0$
\begin{align} \label{maj_EYt} e^{bt}\E_{x_0}[Y_{t}]\leq x_0+ b\int_0^t e^{bs} \E_{x_0}[Y_s]ds - \int_0^t  \eta e^{bs}ds, \end{align}
and if we introduce the function 
$$ \mathcal{G}(t) = e^{-bt}\int_0^t e^{bs}\E_{x_0}[Y_s]ds, $$
we obtain
\begin{align*} \mathcal{G}'(t) &= -be^{-bt}\int_0^t e^{bs}\E_{x_0}[Y_s]ds + \E_{x_0}[Y_t] \\
& \leq -be^{-bt}\int_0^t e^{bs}\E_{x_0}[Y_s]ds + e^{-bt}\left( x_0+ b\int_0^t e^{bs} \E_{x_0}[Y_s]ds - \int_0^t  \eta e^{bs}ds \right)\\
& =  e^{-bt}\left( y_0 - \int_0^t  \eta e^{bs}ds \right)\\
&= \left(y_0+ \frac{\eta}{b}\right)e^{-bt}- \frac{\eta}{b}, \end{align*}
where we applied \eqref{maj_EYt} to obtain the inequality.
This implies that for every $t\geq 0$,
$$ \mathcal{G}(t) \leq \left(x_0+ \frac{\eta}{b}\right)\frac{1-e^{-bt}}{b}- \frac{\eta}{b}t $$
which is not possible as $Y_t$ is non-negative.
There is thus a contradiction and we deduce that 
$\mathcal{T}_1<\infty$.
Notice now that if $\mathcal{T}_2<\infty$, then 
$$ e^{b \mathcal{T}_2}\E_{x_0}[Y_{\mathcal{T}_2}]\leq e^{b \mathcal{T}_1}
\E_{x_0}[Y_{\mathcal{T}_1}]+ (b-\alpha)\int_{\mathcal{T}_1}^{\mathcal{T}_2}e^{bs}\E_{x_0}[Y_s]ds, $$
which entails by an application of Grömwall's Lemma
$$ \E_{x_0}[Y_{\mathcal{T}_2}]\leq e^{-\alpha (\mathcal{T}_2- \mathcal{T}_1)}\E_{x_0}[Y_{\mathcal{T}_1}]\leq e^{-\alpha (\mathcal{T}_2- \mathcal{T}_1)}\frac{\eta}{\alpha}. $$
This contradicts the definition of $\mathcal{T}_2$, which proves that 
$\mathcal{T}_2=\infty$.
Notice that all the previous calculations stay true if we take a smaller $\eta$. This thus proves the following property:
\begin{equation*}
\lim_{t \to \infty} \E_{x_0}[Y_t]=0.
\end{equation*}
The end of the proof of point (1) follows the scheme we already used: Markov inequality to show that for any $\eps>0$,
$$\lim_{t \to \infty}\P_{x_0}(Y_t>\eps) = 0,$$
Many-to-One formula, and convergence of $N_te^{-(b-d)t}$ on the survival event of the cell population.\\

Let us now prove point (2). The proof is very similar to the one of Theorem \ref{cor_esperance}.(3). Equation \eqref{lowerbound_explo} holds for $Y$. Hence we just have to prove that for any $x_1<\infty$ satisfying \eqref{lowerbound_explo}, there exists $T_0<\infty$ such that
\begin{equation} \label{needed_ineq} \P_{x_0}(\tau^+(x_1)<T_0)>0. \end{equation}
Indeed in this case it will imply that
$$ \P_{x_0}(\tau^+(\infty)\leq T_0+ T_1)>0, $$
and thus 
$$ r(\E_{x_0}[Y_{t}])= r(\infty)>0, \quad \forall t \geq T_0+T_1. $$ 
Indeed, as \ref{LNinfty} implies \eqref{cond_rest_infty} the expectation of $Y_t$ stays infinity after the time $T_0+T_1$ (see point (1) of Theorem \ref{theo_CDI}).
Once we have this property the end of the proof is the same.

Let us thus introduce 
$$T:=\tau^-(0) \wedge \tau^+(2x_1).$$
Then following the proof of Lemma 5.1 in \cite{li2017general} we get for $\delta<\infty$,
$$ \P_{x_0}\left( \sup_{t \leq \delta} \left|\int_0^{t\wedge T}\sqrt{2\sigma^2 (Y_s)}dB_s+\int_0^{t\wedge T}\int_0^{p(Y_{s-})}\int_{\mathbb{R}_+}z\widetilde{Q}(ds,dx,dz) \right|>\sqrt{\delta} \right)\leq C(x_1)\sqrt{\delta}. $$
Moreover, 
$$ \P_{x_0}\left(\int_0^\delta\int_0^{2b}\int_{0}^1(\theta-1)\tilde{Y}_{s-}N(ds,dx,d\theta)=0 \right)= e^{-2b\delta}.$$
Now if $\delta$ is small enough to satisfy $r(x_0-\sqrt{\delta})>r(x_0)/2>0$ (such a $\delta$ exists as $r$ is a continuous function), then 
$$ \P\left( \int_0^\delta\int_0^{r(x_0)/2}\int_{\mathbb{R}_+}\mathfrak{p}N_1(ds,dx,d\mathfrak{p})>2x_1 \right) = C(x_0,x_1)>0. $$
Combining these three computations as in the proof of Proposition \ref{cor_esperance}.(3) we get that \eqref{needed_ineq} holds true, which ends the proof.
\end{proof}

\begin{proof}[Proof of Proposition \ref{theo_CDI}]

Recall the definition of $G_a^{(t)}(x_0,x)$ in \eqref{eq:Ga1} and that $Z^{(a)}_t$ defined in \eqref{def_Za} is a $\mathcal{F}_t$-martingale, and notice that under the assumptions of Proposition \ref{theo_CDI}, as $r(\cdot)\equiv 0$, $G_a^{(t)}(x_0,x)$ does not depend on $(x_0,t)$. We will thus drop these dependencies in the proof.
For the sake of readability we will use the following Conditions \eqref{cond_rest_infty_bis} and \eqref{cond_comdown_infty_bis}, equivalent to the Conditions \eqref{cond_rest_infty} and \eqref{cond_comdown_infty}, respectively:
\begin{equation}\label{cond_rest_infty_bis}
\exists a \in \mathcal{A} \quad \text{and a non-negative function } f, \quad
G_a(x) \geq f(x)+ o(\ln x), \quad (x \to \infty),
\end{equation} 
\begin{equation}\label{cond_comdown_infty_bis}
\exists 0<a<1, \quad \eta>0, \quad G_a(x) \geq  \ln x(\ln \ln x)^{1+\eta}, \quad (x \to \infty).
\end{equation}
This can be seen by noticing that the expressions on the left hand side of \eqref{cond_rest_infty} and \eqref{cond_comdown_infty} equal 
$$ \frac{G_a(x)}{a-1}+2b \frac{1-\E[\Theta^{1-a}]}{1-a}. $$

Here again we begin with the study of the auxiliary process $Y$. Recall the definition of $\tau^-$ in \eqref{def_tau}. Then we have the two following properties:
\begin{enumerate}
\item If there exists $a \in \mathcal{A}$ and a non-negative function $f$ such that
\eqref{cond_rest_infty_bis} holds,
then for any $\mathfrak{b},\mathfrak{d}>0$, 
\begin{equation} \label{cas1_lemCDI}
\lim_{x \to \infty} \P_x(\tau^-(\mathfrak{b})<\mathfrak{d})=0.
\end{equation}
\item If there exist $0<a<1$, $\eta>0$, such that
\eqref{cond_comdown_infty_bis} holds, 
then for $\delta$ small enough and $\mathfrak{b}$ large enough,
\begin{equation}\label{quantif_CDI}
\inf_{x >1}\P_x (\tau^-(\mathfrak{b})\leq l(\mathfrak{b},\delta,\eta))\geq e^{-8\mathfrak{b}^{-\delta(1-a)}},
\end{equation}
where $l(\mathfrak{b},\delta,\eta)$ is a function of $\mathfrak{b}$, $\delta$ and $\eta$, finite for $\mathfrak{b}$ large enough and $\delta$ small enough, which will be defined in \eqref{def_l}
\end{enumerate}

Let us begin by proving \eqref{cas1_lemCDI}. The beginning of the proof follows ideas of the proof of \cite[Theorem 2.13]{li2017general} but needs to be adapted as our bound is sharper and the process under consideration exhibits negative jumps unlike the processes considered in \cite{li2017general}.
Let us take positive constants $\mathfrak{d}$, $\alpha<(a-1)/4\mathfrak{d}$, and $\mathfrak{b}$ such that $G_a(x)\geq -\alpha \ln x$ holds for $x>\mathfrak{b}$. For any $0 < \eps<\mathfrak{b}^{-1}$, Equation (5.17) of \cite{li2017general} holds.
Then following the computations of \cite{li2017general} with $-\alpha \ln x$ in place of $-(\ln x)^r$ with $0<r<1$, we get for $\eps^{-2^n}\leq x < \eps^{-2^{n+1}}$:
$$ \P_x\left( \tau^-(\mathfrak{b})<\tau^+(\eps^{-2^{n+1}}) \wedge \mathfrak{d} \right)\leq \mathfrak{b}^{a-1}\eps^{(a-1-2\alpha \mathfrak{d})2^n} \leq \mathfrak{b}^{a-1}\eps^{(a-1)2^{n-1}}, $$
where in the last inequality we used that $\alpha \leq (a-1)/4\mathfrak{d}$. The end of the proof of \eqref{cas1_lemCDI} follows the end of the proof of point $(i)$ of \cite[Theorem 2.13]{li2017general}.\\

Let us now prove \eqref{quantif_CDI}. Here again we essentially follow the proof of point $(i)$ of \cite[Theorem 2.13]{li2017general}, but the process $Y$ may experience negative jumps unlike the processes considered in \cite{li2017general}, we take sharper bounds, and we need to adapt the proof. We just list the modifications to be made to cover our case. Instead of taking $t(x)= (1+\delta)^r (\ln x)^{1-r}$ (with $0<r<1$, $\delta>0$), we choose
$$ t(x):= \left( \ln \ln x^{1/(1+\delta)}\right)^{-(1+\eta)}, \quad \forall x>1, $$
where $\eta$ is chosen as in \eqref{cond_comdown_infty}. 
Let $0<\theta<1$. By taking $\theta x$ instead of $x$ in the computations at the end of \cite[p.18]{li2017general}, we get in place of their inequality $(5.18)$:
\begin{align*} \P_{\theta x}\left(\tau^-(x^{(1+\delta)^{-1}})>\tau^+(x^{(1+\delta)})\right)
&\leq \mathbf{1}_{\{ \theta x > x^{(1+\delta)^{-1}} \}}\theta^{1-a} x^{-\delta(1-a)}\\
&\leq \mathbf{1}_{\{ \theta x > x^{(1+\delta)^{-1}} \}}x^{-\delta(1-a)}, \end{align*}
as $a<1$.
The two first inequalities of \cite[p.33]{li2017general} are modified as follows:
\begin{align*}
\theta^{1-a}x^{1-a} \geq & \E_{\theta x} \left[ Y^{1-a}_{\tau^-(x^{(1+\delta)^{-1}})}\exp\left(\int_0^{\tau^-(x^{(1+\delta)^{-1}})}G_a(Y_s)ds\right)\mathbf{1}_{\{ t(x)<\tau^-(x^{(1+\delta)^{-1}})<\tau^+(x^{(1+\delta)}) \}} \right]\\
\geq & x^{\frac{1-a}{1+\delta}}\E[\Theta^{1-a}] \E_{\theta x} \left[ \exp\left(\int_0^{t(x)}\ln Y_s (\ln \ln Y_s)^{1+\eta}ds\right)\mathbf{1}_{\{ t(x)<\tau^-(x^{(1+\delta)^{-1}})<\tau^+(x^{(1+\delta)}) \}} \right],
\end{align*}
as a negative jump may occur at time $\tau^-(x^{(1+\delta)^{-1}})$, which leads to
$$ \P_{\theta x}(t(x)<\tau^-(x^{(1+\delta)^{-1}})<\tau^+(x^{(1+\delta)}))\leq \mathbf{1}_{\{ \theta x > x^{(1+\delta)^{-1}} \}}x^{-(1+a\delta)/(1+\delta)}. $$
If $\delta$ is small enough and $x$ large enough, we thus still obtain the inequality:
$$ \P_x(\tau^-(x^{(1+\delta)^{-1}}))\leq 2 \mathbf{1}_{\{ \theta x > x^{(1+\delta)^{-1}} \}} x^{(\delta-(1+a\delta))/(1+\delta)}\leq 2  x^{-\delta(1-a)}. $$
This proves that in the computations of the end of \cite[p.33]{li2017general}, we may replace the terms of the form 
$$ \P_{\mathfrak{b}^{(1+\delta)^n}}(.) $$
by terms of the form
$$ \P_{\tau^-(\mathfrak{b}^{(1+\delta)^n})}(.) $$
without modifying the bounds.
To end the proof, we still have to check that with our choice for the function $t$, 
$$ \sum_{n=1}^\infty t(\mathfrak{b}^{(1+\delta)^n})<\infty, $$
which is true. 
More precisely we have
\begin{equation} \label{def_l} \sum_{n=1}^\infty t(\mathfrak{b}^{(1+\delta)^n})= \sum_{n=1}^\infty \left((n-1)\ln (1+\delta)+ \ln \ln \mathfrak{b} \right)^{-(1+\eta)}=:l(\mathfrak{b},\delta,\eta). \end{equation}
We may thus obtain the following inequality,
\begin{equation*}
\inf_{x >1}\P_x (\tau^-(\mathfrak{b})\leq l(\mathfrak{b},\delta,\eta))\geq e^{-8\mathfrak{b}^{-\delta(1-a)}}
\end{equation*}
which ends the proof of \eqref{quantif_CDI}.\\

We may now prove Proposition \ref{theo_CDI}.
Let us begin with point $(1)$. By the Many-to-One formula \eqref{MtO}, and the definition of $\tau^-(\mathfrak{b})$ we have
$$ e^{-(b-d)t}\E_x \left[\sum_{u \in V_t}\mathbf{1}_{\{X_t^{u}\leq \mathfrak{b}\}}\right]= 
\P_x \left(Y_t\leq \mathfrak{b}\right)\leq \P_x (\tau^-(\mathfrak{b}) \leq t)\to 0, \quad (x \to \infty), $$
where we used \eqref{cas1_lemCDI} to obtain the last limit. We conclude as before using the convergence of $N_te^{-(b-d) t}$ on the event of the cell population survival.\\

Let us now consider point $(2)$. Let $t,\eps>0$, and $\delta$ such that \eqref{quantif_CDI} holds for $\mathfrak{b}$ large enough. By definition of $l(\mathfrak{b},\delta,\eta)$ in \eqref{def_l},
$$ \lim_{\mathfrak{b} \to \infty} l(\mathfrak{b},\delta,\eta)=0. $$
We also have
$$ \lim_{\mathfrak{b} \to \infty} e^{-8\mathfrak{b}^{-\delta(1-a)}}=1. $$
Hence there exists $\mathfrak{b}_0(t,\delta,\eta,\eps)$ such that if $\mathfrak{b}\geq \mathfrak{b}_0(t,\delta,\eta,\eps)$,
\begin{equation*}
\inf_{x >1}\P_x (\tau^-(\mathfrak{b})\leq t)\geq 1-\eps.
\end{equation*}

Now, notice that \eqref{cond_comdown_infty_bis} implies that Conditions of \cite[Theorem 2.8(i)]{li2017general} (or its extension \cite[Theorem 4.1.i)]{companion}) are satisfied and thus for any $x\geq 0$, 
\begin{equation} \label{esp_nulle} \E_x[e^{-\tau^+(\infty)}]=0, \end{equation}
where we recall that
$$ \tau^+(\infty):= \lim_{y \to \infty}\tau^+(y). $$
As the process $Y$ admits negative jumps, we have for any $n \in \N$ the inequality
$$ \E \left[ \E_{Y_{\tau^-(\mathfrak{b})}}[e^{-\tau^+(\mathfrak{b}^n)}] \right]\leq  \E_{\mathfrak{b}}[e^{-\tau^+(\mathfrak{b}^n)}] + \int_0^1 \E_{\theta \mathfrak{b}}[e^{-\tau^+(\mathfrak{b}^n)}]\kappa(d\theta). $$
For any $\theta \in [0,1]$, $\E_{\theta \mathfrak{b}}[e^{-\tau^+(\mathfrak{b}^n)}]$ is non-increasing with $n$, and according to \eqref{esp_nulle}
$$ \E_{\theta \mathfrak{b}}[e^{-\tau^+(\mathfrak{b}^n)}] \to 0, \quad n\to \infty. $$
By the Monotone Convergence Theorem, we thus deduce that 
$$ \E \left[ \E_{Y_{\tau^-(\mathfrak{b})}}[e^{-\tau^+(\mathfrak{b}^n)}] \right]\to 0, \quad n\to \infty. $$
Hence, for any $\eps,t>0$, $\mathfrak{b}>1$, there exists $n(\mathfrak{b},t) \in \N$ such that for any $n \geq n(\mathfrak{b})$,
$$ \E \left[ \E_{Y_{\tau^-(\mathfrak{b})}}[e^{-\tau^+(\mathfrak{b}^n)}] \right]\leq \eps e^{-t}, $$
which implies, by the Markov inequality, 
\begin{align*}\E \left[ \P_{Y_{\tau^-(\mathfrak{b})}}(\tau^+(\mathfrak{b}^n)\leq t) \right] & =
\E \left[ \P_{Y_{\tau^-(\mathfrak{b})}}(e^{-\tau^+(\mathfrak{b}^n)})\geq e^{-t} \right]
\leq  \frac{\E \left[ \E_{Y_{\tau^-(\mathfrak{b})}}[e^{-\tau^+(\mathfrak{b}^n)}] \right]}{e^{-t}}\leq \eps . \end{align*}
To conclude, for any $t,\eps>0$ and $\delta$ small enough, if we choose $\mathfrak{b}\geq \mathfrak{b}_0(t,\delta,\eta,\eps)$ and $n \geq n(\mathfrak{b})$ we obtain thanks to the strong Markov inequality, and if we recall that $(\mathcal{F}_t, t \geq 0)$ is the canonical filtration associated to $Y$, for any $x>1$,
\begin{align*}
\P_x(Y_t>\mathfrak{b}^n) & \leq  \P_x(\tau^-(\mathfrak{b})>t)+ \P_x(Y_t>\mathfrak{b}^n, \tau^-(\mathfrak{b})\leq t)\\
& \leq  \P_x(\tau^-(\mathfrak{b})>t)+ \E \left[\P_x(Y_t>\mathfrak{b}^n, \tau^-(\mathfrak{b})\leq t|\mathcal{F}_{\tau^-(\mathfrak{b})})\right]\\
& \leq  \P_x(\tau^-(\mathfrak{b})>t)+\E \left[ \P_{Y_{\tau^-(\mathfrak{b})}}(\tau^+(\mathfrak{b}^n)\leq t) \right] \leq 2 \eps.
\end{align*}
We conclude as before using the Many-to-One formula \eqref{MtO} and the convergence of $N_te^{-(b-d) t}$ on the event of the cell population survival.
\end{proof}

\section*{Acknowledgments}
The author thanks Aline Marguet for her comments and suggestions.
This work was partially funded by the Chair "Mod\'elisation Math\'ematique et Biodiversit\'e" of VEOLIA-Ecole Polytechnique-MNHN-F.X. This work has also been partially supported by the LabEx PERSYVAL-Lab (ANR-11-LABX-0025-01) funded by the French program Investissement d’avenir.

\bibliographystyle{abbrv}
\bibliography{biblio}

\end{document}